\newtheorem{theorem}{Theorem}[section]
\newtheorem{lemma}[theorem]{Lemma}
\newtheorem{conjecture}[theorem]{Conjecture}
\newtheorem{corollary}[theorem]{Corollary}
\theoremstyle{definition}
\newtheorem{xca}[theorem]{Exercise}
\theoremstyle{remark}
\newtheorem{remark}[theorem]{Remark}
\begin{document}

\title[]
{Supercongruences for\\
the Almkvist-Zudilin numbers}

%\author[]{Tewodros Amdeberhan}

%\affil[]{{\small Tulane University, New Orleans; tamdeber@tulane.edu}}

\author{Tewodros Amdeberhan and Roberto Tauraso}
\address{Department of Mathematics,
Tulane University, New Orleans, LA 70118}
\email{tamdeber@tulane.edu}

\address{Dipartimento di Matematica,
 Universita' di Roma "Tor Vergata",
 Via della Ricerca Scientifica, 1, 
 00133 Roma  -   Italy}
\email{tauraso@mat.uniroma2.it}

%    General info
\subjclass[2010]{Primary ??}

\date{\today}

\keywords{??}

\begin{abstract}
Given a prime number $p$, the study of divisibility properties of a sequence $c(n)$ has two contending approaches: $p$-adic valuations and superconcongruences. The former searches for the highest power of $p$ dividing $c(n)$, for each $n$; while the latter (essentially) focuses on the maximal powers $r$ and $t$ such that $c(p^rn)$ is congruent to $c(p^{r-1}n)$ modulo $p^t$. This is called supercongruence. In this paper, we prove a conjecture on supercongruences for sequences that have come to be known as the Almkvist-Zudilin numbers. Some other (naturally) related family of sequences will be considered in a similar vain.
\end{abstract}

\maketitle

\newcommand{\ba}{\begin{eqnarray}}
\newcommand{\ea}{\end{eqnarray}}
\newcommand{\ift}{\int_{0}^{\infty}}
\newcommand{\nn}{\nonumber}
\newcommand{\no}{\noindent}
\newcommand{\lf}{\left\lfloor}
\newcommand{\rf}{\right\rfloor}
\newcommand{\realpart}{\mathop{\rm Re}\nolimits}
\newcommand{\imagpart}{\mathop{\rm Im}\nolimits}

\newcommand{\op}[1]{\ensuremath{\operatorname{#1}}}
\newcommand{\pFq}[5]{\ensuremath{{}_{#1}F_{#2} \left( \genfrac{}{}{0pt}{}{#3}
{#4} \bigg| {#5} \right)}}

\newtheorem{Definition}{\bf Definition}[section]
\newtheorem{Thm}[Definition]{\bf Theorem}
\newtheorem{Example}[Definition]{\bf Example}
\newtheorem{Lem}[Definition]{\bf Lemma}
\newtheorem{Cor}[Definition]{\bf Corollary}
\newtheorem{Prop}[Definition]{\bf Proposition}
\numberwithin{equation}{section}

\section{Introduction}

\noindent
The {\em Ap\'ery numbers} $A(n)=\sum_{k=0}^n\binom{n}k^2\binom{n+k}k^2$ were valuable to R. Ap\'ery in his celebrated proof ~\cite{A} 
that $\zeta(3)$ is an irrational number. Since then these numbers have been a subject of much research. For example, they stand among a host of other sequences with the property 
$$A(p^rn)\equiv_{p^{3r}} A(p^{r-1}n)$$
now known as {\em supercongruence} $-$ a term dubbed by F. Beukers ~\cite{B}.

\smallskip
\noindent
At the heart of many of these congruences sits the classical example $\binom{pb}{pc}\equiv_{p^3}\binom{b}c$ which is a stronger variant of the famous congruence $\binom{pb}{pc}\equiv_p\binom{b}c$ of Lucas. For a compendium of references on the subject of Ap\'ery-type sequences, see ~\cite{S}.

\smallskip
\noindent
Let us begin by fixing  notational conventions. Denote the set of positive integers by $\mathbb{N}^+$. For $m\in\mathbb{N}^+$, let $\equiv_m$ represent congruence modulo $m$. Throughout, assume $p\geq5$ is a prime.

\smallskip
\noindent
In this paper, true to tradition, we aim to investigate similar type of supercongruences for the following family of sequences. For integers $i\geq0$ and $n\geq1$, define
\begin{align*} a_i(n):&=\sum_{k=0}^{\lfloor(n-i)/3\rfloor}(-1)^{n-k}\binom{3k+i}k\binom{2k+i}k\binom{n}{3k+i}\binom{n+k}k3^{n-3k-i} \\
%&=\sum_{k\geq0}(-1)^{n-k}\binom{n+k}{k,k,k,k+i,n-3k-i}3^{n-3k-i}. 
\end{align*}
In recent literature, $a_0(n)$ are referred to as the Almkvist-Zudilin numbers.
Our motivation for the present work here emanates from the following claim found in ~\cite{OS} (see also ~\cite{CCS}, ~\cite{OSS}).

\begin{conjecture}\label{MainC}
For a prime $p$ and $n\in\mathbb{N}^+$, the Almkvist-Zudilin numbers satisfy
$$a_0(pn)\equiv_{p^3}a_0(n).$$
\end{conjecture}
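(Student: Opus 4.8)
The plan is to expand $a_0(pn)$ as its defining sum over $m$ and reindex by writing $m=pj+i$ with $0\le i\le p-1$, so that
\[
a_0(pn)=\sum_{i=0}^{p-1}\sum_{j}(-1)^{n-j}(-1)^{i}\binom{3pj+3i}{pj+i}\binom{2pj+2i}{pj+i}\binom{pn}{3pj+3i}\binom{p(n+j)+i}{pj+i}3^{pn-3pj-3i},
\]
using $(-1)^{pn-pj-i}=(-1)^{n-j}(-1)^{i}$ since $p$ is odd. The factor $\binom{pn}{3pj+3i}$ is the governing one: writing $3i=pc_i+d_i$ with $0\le d_i<p$, Lucas' theorem gives $\binom{pn}{3pj+3i}\equiv\binom{n}{3j+c_i}\binom{0}{d_i}\pmod p$, which vanishes modulo $p$ unless $d_i=0$, i.e. unless $i=0$ (as $p\ge 5$). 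Thus only the residue $i=0$ survives modulo $p$; the blocks with $i\ne 0$ are each divisible by $p$ and so still contribute modulo $p^3$, and the family $a_i(n)$ is the natural bookkeeping device into which the residue-$i$ contributions organize.

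For the main block $i=0$ (so $m=pj$), I would apply the classical refinement $\binom{pb}{pc}\equiv_{p^3}\binom bc$ to all four binomials $\binom{3pj}{pj}$, $\binom{2pj}{pj}$, $\binom{pn}{3pj}$, $\binom{p(n+j)}{pj}$, reducing them modulo $p^3$ to $\binom{3j}{j}$, $\binom{2j}{j}$, $\binom{n}{3j}$, $\binom{n+j}{j}$, which are exactly the binomials in the $j$-th term of $a_0(n)$. The one factor that does not reduce cleanly is the power of three: $3^{pn-3pj}=(3^{p-1})^{n-3j}\,3^{n-3j}$, and since $3^{p-1}=1+pq$ with $q=(3^{p-1}-1)/p$ the Fermat quotient, this carries a nontrivial correction $(1+pq)^{n-3j}\equiv 1+(n-3j)pq+\binom{n-3j}{2}p^{2}q^{2}\pmod{p^3}$. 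Hence the main block equals $a_0(n)$ plus an explicit error of order $p$.

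For the error blocks $i\ne 0$, I would replace the Lucas estimate by refined congruences to order $p^2$ (and $p^3$ where needed), for instance $\binom{pn}{k}\equiv\frac{pn}{k}(-1)^{k-1}\pmod{p^2}$ for $1\le k\le p-1$ and its diagonal analogue for $\binom{p(n+j)+i}{pj+i}$, while $\binom{2pj+2i}{pj+i}$ and $\binom{3pj+3i}{pj+i}$ are reduced by the same classical congruence. Because each such block already carries one factor of $p$ from $\binom{pn}{3pj+3i}$, only its leading and first subleading parts enter modulo $p^3$, and these assemble into expressions governed by the $a_i(n)$ and by harmonic sums over the relevant residues $d_i$.

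The crux, and the step I expect to be the main obstacle, is showing that the $O(p)$ and $O(p^2)$ corrections cancel between the two sources: the Fermat-quotient correction $(n-3j)pq+\cdots$ produced by the power of three in the main block must be annihilated by the aggregate of the $i\ne 0$ error blocks. Concretely this should reduce to a family of harmonic-sum and Wolstenholme-type congruences (such as $\sum_{i=1}^{p-1}1/i\equiv 0\pmod{p^2}$ and its weighted variants) together with the linear relations among the $a_i(n)$ coming from their common defining recurrence in $i$. Verifying that these precisely absorb the Fermat quotient $q$ is where the genuine work lies; once that cancellation is established, the surviving contribution is exactly $a_0(n)$ modulo $p^3$, which is the conjecture.
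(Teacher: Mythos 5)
You have reconstructed the skeleton of the paper's argument but not its substance. Your reindexing $m=pj+i$ is exactly the paper's substitution $k=pm+r$ in the defining sum; the $i=0$ block does reduce via $\binom{pb}{pc}\equiv_{p^3}\binom{b}{c}$ to the terms of $a_0(n)$ carrying the extra factor $3^{(n-3j)(p-1)}$; and the conjecture is thereby equivalent to showing that, for each fixed $j$, the aggregate of the $i\neq0$ terms supplies precisely the factor needed to convert $3^{p(n-3j)}$ into $3^{n-3j}$ modulo $p^3$ --- this is word for word the paper's reduction, Theorem \ref{red}. But you stop exactly there, flagging the cancellation as ``the main obstacle'' and ``where the genuine work lies.'' That cancellation \emph{is} the theorem: everything you actually establish (the Lucas vanishing of the $i\neq0$ blocks mod $p$, the Fermat-quotient expansion of the power of $3$) is routine, and the load-bearing step is left as a conjecture inside your proof. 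As written, this is an accurate plan, not a proof.

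Moreover, the tools you nominate for the missing step would not carry it. Once the four binomials are expanded to order $p^2$ (and $\binom{pn}{3pj+3i}$ to order $p^3$, which requires considerably more than the congruence $\binom{pn}{k}\equiv_{p^2}\frac{pn}{k}(-1)^{k-1}$ you cite), the error blocks collapse not to Wolstenholme-type harmonic sums but to truncated hypergeometric sums $\sum_i\binom{3i}{i,i,i}3^{-3i}/i$, $\sum_i\binom{3i}{i,i,i}3^{-3i}/i^2$ and $\sum_i\binom{3i}{i,i,i}(H_{3i}-H_i)3^{-3i}/i$; the Fermat quotient materializes only through the supercongruences of Lemma \ref{L57}, imported from \cite{T}, which evaluate these modulo $p^2$ (resp.\ $p$) as $-3q_p(1/3)+\tfrac{3p}{2}q_p(1/3)^2$, $-\tfrac92 q_p(1/3)^2$ and $0$. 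One must also split the range of $i$ at $p/3$ and $2p/3$, since $\binom{3i}{i,i,i}$ acquires one or two factors of $p$ there and $\binom{pn}{3pj+3i}$ changes shape accordingly (Corollary \ref{C55}); this case analysis is what produces the coefficients $N$, $\binom{N}{2}$, $\binom{N}{3}$ that reassemble into $q_p(3^{-N})$, and your uniform treatment of the ``$i\neq0$ blocks'' passes over it. Finally, the suggestion that the sequences $a_i(n)$, $i>0$, organize these error blocks is not how the paper proceeds: those are an independent family indexed along $3k+i$, handled separately in Section 3, and they play no role in the proof of Conjecture \ref{MainC}.
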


\smallskip
\noindent
Our main results can be summed up as: 

{\em if $p$ is a prime and $n\in\mathbb{N}^+$, then $a_0(pn)\equiv_{p^3}a_0(n)$ and $a_i(pn)\equiv_{p^2}0$ for $i>0$.}

\smallskip
\noindent
The organization of the paper is as follows. Section 2 lays down some preparatory results to show the vanishing of $a_i(pn)$ modulo $p^2$, for $i>0$. Section 3 sees the completion of the proof. Our principal approach in proving the main conjecture $a_0(pn)\equiv_{p^3}a_0(n)$ relies on a ``machinery'' we develop as a proof strategy which maybe described schematically as:
$$\text{{\em reduction}} + \text{{\em $p$-identities}}.$$
Sections 4 and 5 exhibit its elaborate execution. The reduction brings in a {\em tighter} claim and it also offers an advantage in allowing to work with a single sum instead of a double sum. In Section 6, we complete the proof for Conjecture \ref{MainC}. The paper concludes with Section 7 where we declare an improvement on the results from Section 3 which states a congruence for the family of sequences $a_i(pn)$ modulo $p^3$, when $i>0$. Furthermore, in this last section, the reader will find a proof outline guided by our ``machinery''.

\section{Preliminary results}

\noindent
{\em Fermat quotients} are numbers of the form $q_p(x)=\frac{x^{p-1}-1}p$ and they played a useful role in the study of cyclotomic fields and Fermat's Last Theorem, see ~\cite{R}. 
The next three lemmas are known and we give their proofs for the sake of completeness.
\begin{lemma}
If $a \not \equiv_p 0$ then for $d\in \mathbb{Z}$,
\begin{align}\label{FT}
q_p(a^d)\equiv_{p^2}d\,q_p(a)+p\binom{d}{2}\,q_p(a)^2.
\end{align}
\end{lemma}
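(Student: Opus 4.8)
The plan is to start from the defining relation $a^{p-1} = 1 + p\,q_p(a)$ and to write $Q := q_p(a)$ for brevity; Fermat's little theorem guarantees $Q \in \mathbb{Z}$ (indeed $Q \in \mathbb{Z}_p$), so that powers of $pQ$ carry increasing powers of $p$. The key observation is that
\[
(a^d)^{p-1} = \left(a^{p-1}\right)^d = (1 + pQ)^d,
\]
whence $q_p(a^d) = \dfrac{(1+pQ)^d - 1}{p}$, and everything reduces to expanding $(1+pQ)^d$.

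For $d \geq 0$ I would simply apply the binomial theorem, obtaining
\[
q_p(a^d) = \frac1p\sum_{j=1}^{d}\binom{d}{j}(pQ)^j = \sum_{j=1}^{d}\binom{d}{j}p^{j-1}Q^j = dQ + \binom{d}{2}pQ^2 + \binom{d}{3}p^2Q^3 + \cdots.
\]
Every term with $j \geq 3$ is divisible by $p^2$, so reducing modulo $p^2$ leaves exactly $dQ + \binom{d}{2}pQ^2$, which is the claimed congruence.

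To cover all $d \in \mathbb{Z}$ at once, I would read the expansion $p$-adically: since $pQ$ has positive $p$-adic valuation, the generalized binomial series $(1+pQ)^d = \sum_{j\ge 0}\binom{d}{j}(pQ)^j$ converges in $\mathbb{Z}_p$ for every integer $d$, and the same truncation applies. Alternatively, and more elementarily, I would first establish the exact multiplicative rule $q_p(xy) = q_p(x) + q_p(y) + p\,q_p(x)q_p(y)$ (immediate from multiplying out $(1+pq_p(x))(1+pq_p(y))$) and then induct on $d$: the step $q_p(a^{d+1}) = q_p(a^d) + Q + p\,q_p(a^d)Q$ together with Pascal's identity $\binom{d}{2} + d = \binom{d+1}{2}$ reproduces the coefficient modulo $p^2$. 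The negative direction follows by first solving $0 = q_p(a\cdot a^{-1}) = Q + q_p(a^{-1}) + pQ\,q_p(a^{-1})$ to get $q_p(a^{-1}) \equiv_{p^2} -Q + pQ^2$, which matches the formula at $d=-1$ since $\binom{-1}{2} = 1$, and then inducting downward.

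The computation itself is routine; the only point requiring a little care is the treatment of negative $d$, where $a^d$ is no longer an integer and $q_p(a^d)$ must be read as a $p$-adic quantity (equivalently, via the modular inverse of $a$ taken modulo $p^2$). Once one accepts that the identities hold in $\mathbb{Z}_p$, the result is uniform in $d$.
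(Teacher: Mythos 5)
Your proof is correct and follows essentially the same route as the paper: both expand $(a^{p-1})^d=\bigl(1+p\,q_p(a)\bigr)^d$ by the binomial theorem, truncate modulo $p^3$ since the cubic and higher terms vanish, and divide by $p$. Your extra care with negative $d$ (via the $p$-adic reading or the multiplicative rule plus induction) is a welcome addition that the paper's terse proof leaves implicit.
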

\begin{proof} Since by Fermat's little theorem $a^{p-1}\equiv_p 1$ then it follows that
$$\left(a^{p-1}\right)^d=\left(1+(a^{p-1}-1)\right)^d
\equiv_{p^3}1+
d(a^{p-1}-1)+\binom{d}{2}(a^{p-1}-1)^2.$$
\end{proof}

\begin{lemma} Let $H_n=\sum_{j=1}^n \frac{1}{j}$ be the $n$-th harmonic number. Then, for $n\in\mathbb{N}^+$, we have 
\begin{equation}\label{IdH}
\sum_{k=1}^n(-1)^k\binom{n}k\binom{n+k}k\frac1k=-2H_n.
\end{equation}
\end{lemma}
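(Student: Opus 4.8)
The plan is to prove the identity by partial fractions applied to a well-chosen rational function, which turns the harmonic sum into a logarithmic derivative. First I would introduce a parameter $x$ and consider
$$R(x)=\frac{\prod_{j=1}^n(x-j)}{\prod_{j=0}^n(x+j)},$$
a proper rational function with simple poles exactly at $x=0,-1,\dots,-n$. Computing the residue at $x=-k$ amounts to evaluating $\prod_{j=1}^n(-k-j)=(-1)^n(n+k)!/k!$ in the numerator and $\prod_{j\neq k}(j-k)=(-1)^k k!\,(n-k)!$ in the denominator; their quotient collapses, after recognizing $\frac{(n+k)!}{(k!)^2(n-k)!}=\binom{n}{k}\binom{n+k}{k}$, to exactly $(-1)^{n-k}\binom{n}{k}\binom{n+k}{k}$. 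This yields the decomposition $R(x)=\sum_{k=0}^n(-1)^{n-k}\binom{n}{k}\binom{n+k}{k}/(x+k)$, and multiplying by $(-1)^n$ gives the clean form
$$\sum_{k=0}^n \frac{(-1)^{k}\binom{n}{k}\binom{n+k}{k}}{x+k}=\frac1x\prod_{j=1}^n\frac{j-x}{j+x}.$$

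Next I would isolate the $k=0$ term. Setting $Q(x)=\prod_{j=1}^n\frac{j-x}{j+x}$, the displayed identity reads $\sum_{k=1}^n(-1)^{k}\binom{n}{k}\binom{n+k}{k}/(x+k)=(Q(x)-1)/x$ as an identity of rational functions. Since $Q(0)=1$, the right-hand side is regular at $x=0$; letting $x\to 0$, the left-hand side tends to the sum to be evaluated, while the right-hand side tends to $Q'(0)$. Thus the whole identity reduces to the single claim $Q'(0)=-2H_n$.

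Finally, logarithmic differentiation closes the argument. From $\log Q(x)=\sum_{j=1}^n\bigl(\log(j-x)-\log(j+x)\bigr)$ one gets $Q'(x)/Q(x)=\sum_{j=1}^n\bigl(\tfrac{-1}{j-x}-\tfrac{1}{j+x}\bigr)$, so evaluating at $x=0$ with $Q(0)=1$ gives $Q'(0)=-2\sum_{j=1}^n\frac1j=-2H_n$, as required.

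The main obstacle is the residue bookkeeping in the first step: one must carefully track the overall sign and the two factorial products so that the residue reduces to precisely $(-1)^{n-k}\binom{n}{k}\binom{n+k}{k}$; everything afterwards is routine. An alternative route would be induction on $n$, establishing $S(n)-S(n-1)=-2/n$ from a Zeilberger-type recurrence for the summand, but the partial-fraction argument avoids guessing and manipulating such a recurrence and is conceptually cleaner.
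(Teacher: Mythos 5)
Your proof is correct and follows essentially the same route as the paper: both rest on the partial fraction identity $\sum_{k=0}^n(-1)^k\binom{n}{k}\binom{n+k}{k}\frac{1}{x+k}=\frac{1}{x}\prod_{j=1}^n\frac{j-x}{j+x}$ (which the paper cites from Mortenson and you rederive via residues), followed by removing the $k=0$ term and letting $x\to 0$. The only cosmetic difference is that you evaluate the resulting limit by logarithmic differentiation of $Q(x)=\prod_{j=1}^n\frac{j-x}{j+x}$, while the paper computes the same derivative directly as a difference quotient; both give $-2H_n$.
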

\begin{proof} For an indeterminate $y$, a simple partial fraction decomposition proves the identity (see \cite[Lemma 3.1]{M})
\begin{align}\label{Mort}
\sum_{k=0}^n(-1)^k\binom{n}k\binom{n+k}k\frac1{k+y}=\frac{(-1)^n}y\prod_{j=1}^n\frac{y-j}{y+j}.\end{align}
Now, subtract $\frac1y$ from both sides and take the limit as $y\rightarrow0$. The right-hand side takes the form
$$\frac1{n!}\lim_{y\rightarrow0}\left[\frac{\prod_{j=1}^n(j-y)-\prod_{j=1}^n(j+y)}y\right]=-2\sum_{k=1}^n\frac1k.$$
The conclusion is clear.
\end{proof}

\begin{lemma}\label{PC}
Suppose $p$ is a prime and $0\leq k <p/3$. Then,
$$(-1)^k\binom{\lfloor p/3\rfloor}k\binom{\lfloor p/3\rfloor+k}{k}
\equiv_p\binom{3k}{k,k,k}3^{-3k}.$$
\end{lemma}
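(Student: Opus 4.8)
The plan is to clear the factor $3^{-3k}$ and reduce everything to an identity among products of linear terms modulo $p$. Writing $m=\lfloor p/3\rfloor$, I would first expand the left-hand side as
$$(-1)^k\binom{m}k\binom{m+k}k=\frac{(-1)^k}{(k!)^2}\prod_{j=0}^{k-1}(m-j)\prod_{j=1}^{k}(m+j).$$
Since $p\ge 5$ we have $3\nmid p$, so $3$ is invertible modulo $p$ and I can absorb a factor of $3$ into each of the $2k$ linear terms: multiplying through by $3^{3k}$ and distributing $3^{2k}$ over the product turns it into $\prod_{j=0}^{k-1}3(m-j)\prod_{j=1}^{k}3(m+j)$, leaving a leftover $3^{k}$.

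Next I would reduce $3m$ modulo $p$, splitting into the two residue classes of $p$ modulo $3$. If $p\equiv 1\pmod 3$ then $3m=p-1\equiv -1$, giving $3(m-j)\equiv -(3j+1)$ and $3(m+j)\equiv 3j-1$, so the product becomes $(-1)^k\prod_{j=0}^{k-1}(3j+1)\prod_{j=1}^{k}(3j-1)$ and the sign cancels the $(-1)^k$ already present. If $p\equiv 2\pmod 3$ then $3m=p-2\equiv -2$, giving $3(m-j)\equiv -(3j+2)$ and $3(m+j)\equiv 3j-2$; but since $\{3j+2:0\le j\le k-1\}=\{3j-1:1\le j\le k\}$ and $\{3j-2:1\le j\le k\}=\{3j+1:0\le j\le k-1\}$, the two cases yield exactly the same product. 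Here the hypothesis $k<p/3$ is used: it guarantees $3k<p$, so none of these linear factors vanishes modulo $p$ and the binomial coefficients are genuinely nonzero.

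Finally I would identify that product with the multinomial coefficient by sorting $(3k)!$ according to residues modulo $3$:
$$(3k)!=\Big(\prod_{j=1}^{k}3j\Big)\Big(\prod_{j=0}^{k-1}(3j+1)\Big)\Big(\prod_{j=1}^{k}(3j-1)\Big)=3^{k}\,k!\,\prod_{j=0}^{k-1}(3j+1)\prod_{j=1}^{k}(3j-1),$$
so that $\binom{3k}{k,k,k}=(3k)!/(k!)^3=\dfrac{3^{k}}{(k!)^2}\prod_{j=0}^{k-1}(3j+1)\prod_{j=1}^{k}(3j-1)$. Combining this with the reduction above gives $3^{3k}(-1)^k\binom{m}k\binom{m+k}k\equiv\binom{3k}{k,k,k}\pmod p$, which is equivalent to the claimed congruence. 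The only genuine subtlety is the bookkeeping across the two residue classes modulo $3$, and observing that the symmetry of the two arithmetic progressions collapses them to a single identity; the remainder is a routine factorial rearrangement.
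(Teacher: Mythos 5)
Your proof is correct and follows essentially the same route as the paper's: both arguments absorb a factor of $3$ into each of the $2k$ linear terms so that $3\lfloor p/3\rfloor$ reduces to $-1$ or $-2$ modulo $p$, and then identify the resulting product $\prod(3j+1)\prod(3j-1)$ with the non-multiples of $3$ in $(3k)!$. The only cosmetic differences are that the paper first rewrites $\binom{m}{k}\binom{m+k}{k}$ as $\binom{2k}{k}\binom{m+k}{2k}$ before expanding and dismisses the case $p\equiv -1\pmod 3$ as analogous, whereas you expand the two binomials directly and handle both residue classes uniformly via the symmetry of the two arithmetic progressions.
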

\begin{proof} We observe that $\binom{n}k\binom{n+k}k=\binom{2k}k\binom{n+k}{2k}$. If $p\equiv_31$, then $\lfloor\frac{p}3\rfloor=\frac{p-1}3$
and hence
\begin{align*} \binom{\frac{p-1}3+k}{2k}
&=\frac{\frac{p-1}3(\frac{p-1}3+k)}{(2k)!}\prod_{j=1}^{k-1}\left(\frac{p-1}3\pm j\right) \\
&\equiv_p\frac{(-1)^k(3k-1)}{3^{2k}(2k)!}\prod_{j=1}^{k-1}(3j\pm1) 
=\frac{(-1)^k(3k)!}{3^{3k}(2k)!k!}. \end{align*}
Therefore, we gather that
$$(-1)^k\binom{\frac{p-1}3}k\binom{\frac{p-1}3+k}k=
(-1)^k\binom{2k}k\binom{\frac{p-1}3+k}{2k}\equiv_p\frac{(3k)!}{3^{3k}!k!^3}=\binom{3k}{k,k,k}3^{-3k}.$$
The case $p\equiv_3-1$ runs analogously. \end{proof}

\begin{corollary}\label{C27} 
For a prime $p$ and an integer $0<i<\frac{p}3$, we have the congruences
\begin{align*} \sum_{k=1}^{p-1}\binom{3k}{k,k,k}\frac{3^{-3k}}k&\equiv_{p}
\sum_{k=1}^{\lfloor p/3\rfloor}\binom{3k}{k,k,k}\frac{3^{-3k}}k\equiv_{p}3q_p(3), \\
\sum_{k=0}^{p-1}\binom{3k}{k,k,k}\frac{3^{-3k}}{k+i}&\equiv_{p}
\sum_{k=0}^{\lfloor p/3\rfloor}\binom{3k}{k,k,k}\frac{3^{-3k}}{k+i}\equiv_{p}0.
\end{align*}
\end{corollary}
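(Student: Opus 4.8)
The plan is to prove each displayed chain by the same two moves: a \emph{reduction} that truncates the upper limit from $p-1$ down to $\lfloor p/3\rfloor$ via a $p$-adic valuation count, followed by an \emph{evaluation} of the truncated sum after converting the multinomial coefficient into ordinary binomials through Lemma~\ref{PC}.

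For the reduction I would observe that for $\lfloor p/3\rfloor<k\le p-1$ one has $k<p$, so $(k!)^3$ is prime to $p$, while by Legendre's formula (and $3k<p^2$)
$$v_p\binom{3k}{k,k,k}=\left\lfloor\frac{3k}{p}\right\rfloor\ge 1,$$
since $3k\ge 3\lfloor p/3\rfloor+3\ge p+1$; moreover this valuation is $\ge 2$ as soon as $3k\ge 2p$. In the $1/k$ sum every $1/k$ with $1\le k\le p-1$ is a $p$-adic unit, so each truncated term is divisible by $p$, which yields the first congruence on line one. In the $1/(k+i)$ sum the denominator fails to be a unit only when $k+i=p$; but $i<p/3$ then forces $k=p-i>2p/3$, whence $3k>2p$ and the valuation bound $\ge 2$ absorbs the single $p$ in the denominator. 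Hence every truncated term of the second sum is $\equiv_p 0$ as well, proving the first congruence on line two.

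For the evaluation I would apply Lemma~\ref{PC} to rewrite $\binom{3k}{k,k,k}3^{-3k}\equiv_p(-1)^k\binom{\lfloor p/3\rfloor}{k}\binom{\lfloor p/3\rfloor+k}{k}$ for $0\le k\le\lfloor p/3\rfloor$. For the second sum, identity~\eqref{Mort} with $n=\lfloor p/3\rfloor$ and $y=i$ gives
$$\sum_{k=0}^{\lfloor p/3\rfloor}(-1)^k\binom{\lfloor p/3\rfloor}{k}\binom{\lfloor p/3\rfloor+k}{k}\frac{1}{k+i}=\frac{(-1)^{\lfloor p/3\rfloor}}{i}\prod_{j=1}^{\lfloor p/3\rfloor}\frac{i-j}{i+j};$$
since $1\le i\le\lfloor p/3\rfloor$ the factor $j=i$ kills the numerator while each $i+j$ is a unit, so the right-hand side is exactly $0$ and line two is finished. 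For the first sum, identity~\eqref{IdH} with $n=\lfloor p/3\rfloor$ evaluates the truncated sum to $-2H_{\lfloor p/3\rfloor}$.

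It remains to show $-2H_{\lfloor p/3\rfloor}\equiv_p 3q_p(3)$. Here I would use the residue-permutation expansion of the Fermat quotient: writing $3k=p\lfloor 3k/p\rfloor+(3k\bmod p)$ and multiplying over $k=1,\dots,p-1$ (the residues $3k\bmod p$ running over a permutation of $1,\dots,p-1$) yields $3q_p(3)\equiv_p\sum_{k=1}^{p-1}\frac{\lfloor 3k/p\rfloor}{k}$. On this range $\lfloor 3k/p\rfloor\in\{0,1,2\}$, and pairing $k\leftrightarrow p-k$ makes the block with value $1$ vanish while the block with value $2$ contributes $2\cdot(-H_{\lfloor p/3\rfloor})$; hence that sum is $\equiv_p -2H_{\lfloor p/3\rfloor}$ and both lines close up. I expect this harmonic-to-Fermat identity, rather than the summations, to be the crux, the only other delicate point being the boundary bookkeeping at $k=p-i$ in the reduction, which the valuation bound $\ge 2$ resolves.
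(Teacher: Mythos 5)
Your proof is correct and follows the same core route as the paper: Lemma~\ref{PC} to convert the central trinomial into the product of binomials, identity~\eqref{IdH} with $n=\lfloor p/3\rfloor$ for the first sum, and identity~\eqref{Mort} with $y=i$ (where the factor $i-j$ at $j=i$ annihilates the product) for the second. You differ in two places, both to your credit: you make explicit the truncation from $p-1$ to $\lfloor p/3\rfloor$ via the valuation $v_p\binom{3k}{k,k,k}=\lfloor 3k/p\rfloor$, including the delicate term $k=p-i$ where the denominator loses a factor of $p$ but the valuation bound $\ge 2$ compensates (the paper passes over this in silence with the word ``harmless''); and you derive $-2H_{\lfloor p/3\rfloor}\equiv_p 3q_p(3)$ from scratch by the residue-permutation expansion of $q_p(3)$ and the pairing $k\leftrightarrow p-k$, whereas the paper simply cites Lehmer's congruence $H_{\lfloor p/3\rfloor}\equiv_p -\tfrac{3}{2}q_p(3)$. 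Your version is self-contained where the paper relies on a reference, and the harmonic-to-Fermat computation you supply is the standard proof of that cited fact; both steps check out.
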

\begin{proof} For the first assertion, we combine \eqref{IdH}, Lemma \ref{PC} and
the congruence (\cite[p. 358]{L})
$$H_{\lfloor p/3\rfloor}\equiv_{p}-3\sum_{r=1}^{\lfloor p/3\rfloor}\frac1{p-3r}
\equiv_{p}-\frac{3q_p(3)}2.$$
The second congruence follows from \eqref{Mort} with $y=i$ and Lemma \ref{PC}.
\end{proof}

\section{Main results on the sequences $a_i(n)$ for $i>0$}

\begin{theorem} For a prime $p$ and $n, i\in\mathbb{N}^+$ with $i<\frac{p}3$, we have $a_i(pn)\equiv_{p^2} 0$.
\end{theorem}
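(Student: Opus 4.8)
The plan is to show that every summand of $a_i(pn)$ is already divisible by $p$, and then that the resulting cofactor sum vanishes modulo $p$ by appealing to Corollary \ref{C27}, whose hypothesis $0<i<p/3$ matches that of the theorem exactly.

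For the first step, write $m=3k+i$ and split the range of $k$ according to whether $p\mid m$. If $p\nmid m$, then from the identity $\binom{pn}{m}=\tfrac{pn}{m}\binom{pn-1}{m-1}$ and $p\nmid m$ we get $p\mid\binom{pn}{m}$, so the summand is divisible by $p$. If instead $p\mid m$, say $m=\ell p$, then $3k\equiv_p-i$ forces $k\not\equiv_p 0$ (this uses $0<i<p/3<p$, so $i\not\equiv_p0$), and from $\binom{m}{k}=\binom{\ell p}{k}=\tfrac{\ell p}{k}\binom{\ell p-1}{k-1}$ with $p\nmid k$ we again obtain $p\mid\binom{3k+i}{k}$. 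Thus in all cases $p$ divides the summand, and we may write $a_i(pn)=p\,T$; it remains to prove $T\equiv_p 0$.

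For the second step I would reduce $T$ modulo $p$ term by term. On the generic block $p\nmid(3k+i)$ the factor of $p$ comes from $\binom{pn}{3k+i}$, and Lucas' theorem applied to $\binom{pn-1}{3k+i-1}$ collapses $\tfrac1p\binom{pn}{3k+i}$ to an explicit residue depending only on $a=\lfloor(3k+i)/p\rfloor$ and $(3k+i)\bmod p$; on the special block $3k+i=\ell p$ one instead uses the classical $\binom{pn}{\ell p}\equiv_{p^3}\binom{n}{\ell}$ together with $\tfrac1p\binom{\ell p}{k}$. Reindexing $k$ by its outer counter and its position within a block of length $\lfloor p/3\rfloor$, the $n$-dependent binomial factors pull out of the inner sum, and what remains is (via Lemma \ref{PC}, which rewrites the surviving central binomials as $\binom{3j}{j,j,j}3^{-3j}$) precisely a sum of the shape $\sum_j\binom{3j}{j,j,j}\tfrac{3^{-3j}}{j+i}$. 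By the second congruence of Corollary \ref{C27} this vanishes modulo $p$ for every $0<i<p/3$, whence $T\equiv_p0$ and $a_i(pn)\equiv_{p^2}0$.

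The main obstacle is the bookkeeping in the second step: extracting the correct first-order (beyond-Lucas) term of each central binomial and checking that the generic and special blocks assemble into a single Corollary \ref{C27} sum with denominator $j+i$ rather than, say, $3j+i$. Getting this shift to match the Corollary, and confirming that the stray powers of $3$ together with the signs $(-1)^{pn-k}$ combine into the factor $3^{-3j}$, is the delicate part; everything else is routine manipulation of binomial coefficients.
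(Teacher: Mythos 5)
Your overall strategy coincides with the paper's: extract one factor of $p$ from each summand through $\binom{pn}{3k+i}$, reduce the cofactor modulo $p$ by Lucas-type congruences, decouple the sum, and kill the inner sum with Lemma \ref{PC} and Corollary \ref{C27}. Your first step (each summand of $a_i(pn)$ is divisible by $p$) is correct. But the second step, which is where all the content lies, is only a plan, and as described it has a genuine gap: you never deal with the carry terms. After writing $k=pm+r$ with $0\le r<p$, Lucas applied to $\binom{3k+i}{k}=\binom{3pm+3r+i}{pm+r}$ and to $\binom{2k+i}{k}$ yields a clean product of base-$p$ digit binomials only when $3r+i\le p$; when $3r+i\ge p+1$ the digit decomposition changes (and the digit binomials typically vanish), so the inner sum does not assemble into a single expression over a block of length roughly $p/3$ unless those terms are first discarded. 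The paper discards them by showing that for $t=3r+i\ge p+1$ the product $\binom{3pm+t}{pm+r,pm+r,pm+r+i}\binom{pn}{3pm+t}$ is divisible by $p^2$ (a base-$p$ carry in the trinomial coefficient supplies one factor of $p$, and $\binom{pn}{3pm+t}$ --- or a second carry when $t=2p$ --- supplies the other). Your case split according to whether $p\mid 3k+i$ does not perform this reduction, and without it the reindexing and the extraction of the $n$-dependent factors fail.

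A second, smaller defect: the inner sum you land on is not $\sum_j\binom{3j}{j,j,j}3^{-3j}/(j+i)$. After Gessel's congruence $\binom{pn}{3pm+3r+i}\equiv_{p^2}(-1)^{r+i-1}\tfrac{pn}{3r+i}\binom{n-1}{3m}$ and decoupling, the inner sum is $\sum_r\binom{3r+i}{r}\binom{2r+i}{r}\tfrac{3^{-3r}}{3r+i}$, whose summand equals $\binom{3r}{r,r,r}3^{-3r}\prod_{j=1}^{i-1}(3r+j)\big/\prod_{j=1}^{i}(r+j)$; a partial fraction decomposition converts this into a linear combination of the sums $\sum_r\binom{3r}{r,r,r}3^{-3r}/(r+j)$ over \emph{all} shifts $1\le j\le i$, each of which vanishes by Corollary \ref{C27} because $j\le i<p/3$. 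The conclusion survives, but for $i\ge 2$ this is a multi-term computation rather than the single shifted sum you assert, and it is exactly the ``delicate part'' you flag without resolving. As it stands, the proposal is an outline of the correct argument rather than a proof.
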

\begin{proof} Let $k=pm+r$ for $0\leq r\leq p-1$. Note: $3k+i=3pm+3r+i\leq pn$. Write
\begin{align*} a_i(pn)=\sum_{m=0}^{\lfloor n/3\rfloor}\sum_{r=0}^{p-1}
(-1)^{pn-pm-r}&\binom{3pm+3r+i}{pm+r}\binom{2pm+2r+i}{pm+r}  \\
&\cdot\binom{pn}{3pm+3r+i}\binom{pn+pm+r}{pm+r}3^{pn-3pm-3r-i}.\end{align*}
If $t:=3r+i\geq p+1$, it is easy to show that the following terms vanish modulo $p^2$:
$$\binom{3pm+t}{pm+r}\binom{2pm+2r+i}{pm+r}\binom{pn}{3pm+t}=
\binom{3pm+t}{pm+r,pm+r,pm+r+i}\binom{pn}{3pm+t}.$$
Therefore, we may restrict to the remaining sum with $3r+i\leq p$:
\begin{align*} a_i(pn)=\sum_{m=0}^{\lfloor n/3\rfloor}\sum_{r=0}^{\lfloor(p-i)/3\rfloor}
(-1)^{n-m-r}&\binom{3pm+3r+i}{pm+r}\binom{2pm+2r+i}{pm+r} \\
&\cdot \binom{pn}{3pm+3r+i}\binom{pn+pm+r}{pm+r}3^{pn-3pm-3r-i}.\end{align*}
We need Lucas's congruence $\binom{pb+c}{pd+e}\equiv_p\binom{d}d\binom{c}e$ to arrive at 
\begin{align*} a_i(pn)\equiv_p\sum_{m=0}^{\lfloor n/3\rfloor}\sum_{r=0}^{\lfloor(p-i)/3\rfloor}
(-1)^{n-m-r}&\binom{3m}m\binom{3r+i}r\binom{2m}m\binom{2r+i}r \\
&\cdot \binom{pn}{3pm+3r+i}\binom{n+m}m3^{pn-3pm-3r-1}.\end{align*}
For $0<j<p$, we apply Gessel's congruence $\binom{p}j\equiv_{p^2}(-1)^{j-1}\frac{p}j$ (if $p=3r+i$, in this case, still the corresponding term properly absorbs into the sum below) so that
\begin{align*} \binom{pn}{3pm+3r+i}&=\frac{pn}{3pm+3r+i}\binom{pn-1}{3pm+3r+i-1}
=\frac{pn}{3pm+3r+i}\binom{p(n-1)+p-1}{3pm+3r+i-1} \\
&\equiv_{p^2}(-1)^{r+i-1}\frac{pn}{3r+i}\binom{n-1}{3m}, \end{align*}
which leads to
\begin{align*} a_i(pn)\equiv_{p^2}pn\sum_{m=0}^{\lfloor n/3\rfloor}\sum_{r=0}^{\lfloor(p-i)/3\rfloor}(-1)^{n-m-r}&\binom{3m}m\binom{3r+i}r\binom{2m}m\binom{2r+i}r \\
&\cdot\frac{(-1)^{r+i-1}}{3r+i}\binom{n-1}{3m}\binom{n+m}m3^{pn-3pm-3r-i}.\end{align*}
Next, we use Fermat's Little Theorem and {\em decouple} the double sum to obtain
\begin{align*} a_i(pn)\equiv_{p^2}n\sum_{m=0}^{\lfloor n/3\rfloor}(-1)^{n-m+i-1}
3^{n-3m-i}&\binom{3m}m\binom{2m}m\binom{n-1}{3m}\binom{n+m}m \\
&\cdot p\sum_{r=0}^{\lfloor(p-i)/3\rfloor}\binom{3r+i}r\binom{2r+i}r\frac{3^{-3r}}{3r+i}.\end{align*}
It suffices to verify the {\em sum over $r$} vanishes modulo $p$. To achieve this, apply partial fraction decomposition and Corollary \ref{C27} (upgrading the sum to $\lfloor p/3\rfloor$ is {\em harmless} here). Thus, 
\begin{align*} \sum_{k=0}^{\lfloor p/3\rfloor}\binom{3k+i}{k}\binom{2k+i}{i}\frac{3^{-3k}}{3k+i}&=\sum_{k=0}^{\lfloor p/3\rfloor}\binom{3k}{k,k,k}3^{-3k}\prod_{j=1}^{i-1}(3k+j)\prod_{j=1}^i(k+j)^{-1} \\
&=\sum_{j=1}^i\alpha_j(i)\sum_{k=0}^{\lfloor p/3\rfloor}
\binom{3k}{k,k,k}\frac{3^{-3k}}{k+j}\equiv_p\sum_{j=1}^i\alpha_j(i)\cdot0=0;
\end{align*}
where $\alpha_j(i)\in\mathbb{Q}$ are some constants. We have enough reason to conclude the proof.
\end{proof}

\section{The reduction on the sequence $a_0(n)$}

\noindent
Our proof of Conjecture \ref{MainC} requires a slightly more delicate analysis than what has been demonstrated in the previous sections for the sequences $a_i(n)$, where $i>0$. As a first major step forward, we state and prove the following \emph{somewhat} stronger result. This will be crucial in scaling down a double sum, which emerges (see proof below) as an expression for the sequence $a_0(pn)$, to a single sum.

\begin{theorem} The congruence
\begin{align} \label{red}
\sum_{r=1}^{p-1}&
(-1)^{r}\binom{3pm+3r}{pm+r}\binom{2pm+2r}{pm+r} 
\binom{pn}{3pm+3r}\binom{p(n+m)+r}{pm+r}3^{-3r}\\\nonumber
&\equiv_{p^3}
p\binom{3m}{m}\binom{2m}{m} 
\binom{n}{3m}\binom{n+m}{m}q_p(3^{-(n-3m)})
\end{align}
or
\begin{align*} 
\sum_{r=0}^{p-1}&
(-1)^{r}\binom{3pm+3r}{pm+r}\binom{2pm+2r}{pm+r} 
\binom{pn}{3pm+3r}\binom{p(n+m)+r}{pm+r}3^{-3r}\\\nonumber
&\equiv_{p^3}
\binom{3m}{m}\binom{2m}{m} 
\binom{n}{3m}\binom{n+m}{m}3^{-(n-3m)(p-1)}
\end{align*}
implies $a_0(pn)\equiv_{p^3} a_0(n)$. 
\end{theorem}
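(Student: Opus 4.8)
The plan is to expand $a_0(pn)$ straight from the definition and run the same Euclidean decomposition $k=pm+r$, $0\le r\le p-1$, that was used for the sequences $a_i(n)$ in Section 3. Writing
\[
a_0(pn)=\sum_{k=0}^{\lfloor pn/3\rfloor}(-1)^{pn-k}\binom{3k}k\binom{2k}k\binom{pn}{3k}\binom{pn+k}k3^{pn-3k},
\]
I would observe that $\binom{pn}{3pm+3r}=0$ as soon as $3pm+3r>pn$, so that letting $r$ range over all of $\{0,\dots,p-1\}$ and $m$ over $\{0,\dots,\lfloor n/3\rfloor\}$ adds only vanishing terms, while no nonzero term is lost because $\binom{pn}{3pm+3r}\neq0$ already forces $m\le n/3$. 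This rewrites $a_0(pn)$ as an honest double sum over that rectangular range.

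Next I would pull the outer data out of the inner $r$-sum. Since $p$ is odd, $(-1)^{pn-pm-r}=(-1)^{n-m}(-1)^{r}$, and $3^{pn-3pm-3r}=3^{p(n-3m)}\cdot3^{-3r}$, whence
\[
a_0(pn)=\sum_{m=0}^{\lfloor n/3\rfloor}(-1)^{n-m}\,3^{p(n-3m)}\,S_m,
\]
where $S_m$ denotes precisely $\sum_{r=0}^{p-1}(-1)^{r}\binom{3pm+3r}{pm+r}\binom{2pm+2r}{pm+r}\binom{pn}{3pm+3r}\binom{p(n+m)+r}{pm+r}3^{-3r}$, that is, the left-hand side of the second displayed congruence in the statement. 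Because $p\ge5$, each $3^{-3r}$ is a $p$-adic unit, so every quantity in sight is a $p$-adic integer and the congruences modulo $p^3$ are well defined.

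I would then invoke the hypothesis in its second form to replace each $S_m$ by $\binom{3m}m\binom{2m}m\binom{n}{3m}\binom{n+m}m\,3^{-(n-3m)(p-1)}$ modulo $p^{3}$. Multiplying through by the integer $3^{p(n-3m)}$ (note $n-3m\ge0$ on the surviving terms) preserves the congruence, and the exponents telescope exactly: $3^{p(n-3m)}\cdot3^{-(n-3m)(p-1)}=3^{n-3m}$. Summing over $m$ and relabelling the index then gives
\[
a_0(pn)\equiv_{p^3}\sum_{m=0}^{\lfloor n/3\rfloor}(-1)^{n-m}\binom{3m}m\binom{2m}m\binom{n}{3m}\binom{n+m}m3^{n-3m}=a_0(n),
\]
which is the claim.

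Finally, to justify that either displayed form may be used, I would record that the two are equivalent. Isolating the $r=0$ term of $S_m$, namely $\binom{3pm}{pm}\binom{2pm}{pm}\binom{pn}{3pm}\binom{p(n+m)}{pm}$, and applying the classical supercongruence $\binom{pb}{pc}\equiv_{p^3}\binom{b}c$ to each of its four factors shows this term is $\equiv_{p^3}\binom{3m}m\binom{2m}m\binom{n}{3m}\binom{n+m}m$; subtracting it from the second form and writing $3^{-(n-3m)(p-1)}-1=p\,q_p\!\left(3^{-(n-3m)}\right)$ via the definition of the Fermat quotient recovers the first form (and the reverse step recovers the second), so the two are interchangeable. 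The only real work in all of this is bookkeeping—tracking the signs, the summation range, and the exponent arithmetic—since the genuine analytic difficulty, the reduction congruence \eqref{red} itself, is supplied in the subsequent sections and is here taken as given.
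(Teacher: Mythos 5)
Your proposal is correct and follows essentially the same route as the paper: substitute $k=pm+r$, factor the sign and the power of $3$ out of the inner sum, apply the hypothesized congruence to each inner sum $S_m$, and telescope $3^{p(n-3m)}\cdot 3^{-(n-3m)(p-1)}=3^{n-3m}$. The only cosmetic difference is that you invoke the second displayed form directly and verify its equivalence with the first via the $r=0$ term and $\binom{pb}{pc}\equiv_{p^3}\binom{b}{c}$, whereas the paper isolates the $r=0$ term and works from the first form; these are the same computation.
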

\begin{proof}  Let $k=pm+r$ for $0\leq r< p$. Then, by using the new parameters, 
\begin{align*} 
a_0(pn)=\sum_{m=0}^{n-1}3^{p(n-3m)}(-1)^{n-m}\sum_{r=0}^{p-1}
(-1)^{r}&\binom{3pm+3r}{pm+r}\binom{2pm+2r}{pm+r} \\
&\cdot \binom{pn}{3pm+3r}\binom{p(n+m)+r}{pm+r}3^{-3r}\\
\end{align*}
Let's isolate the case $r=0$, then, from $\binom{pb}{pc}\equiv_{p^3}\binom{b}c$
and the hypothesis  we get
\begin{align*} 
a_0(pn)&\equiv_{p^3}\sum_{m=0}^{n-1}3^{p(n-3m)}(-1)^{n-m}
\binom{3m}{m}\binom{2m}{m}\binom{n}{3m}\binom{n+m}{m}\left[1+
pq_p(3^{-(n-3m)})\right]\\
&\equiv_{p^3}
\sum_{m=0}^{n-1}(-1)^{n-m}
\binom{3m}{m}\binom{2m}{m}\binom{n}{3m}\binom{n+m}{m}3^{(n-3m)}=a_0(n).
\end{align*}
\end{proof}

\section{Further Preliminary results}

\noindent
In this section, we build a few valuable results aiming at the proof of Theorem \ref{red} and hence that of Conjecture \ref{MainC}.

\begin{lemma}\label{L51} 
If $a>b\geq 0$ and $0<j<p$ then
\begin{equation}\label{L1}
\binom{ap}{bp+j} \equiv_{p^2}(a-b)\binom{a}{b}\binom{p}{j}
\quad\mbox{and}\quad
\binom{ap}{bp-j} \equiv_{p^2}b\binom{a}{b}\binom{p}{j}.
\end{equation}
Moreover, for $0\leq r<p$, 
\begin{align} \label{L2} \begin{split}
&\binom{p(n+m)+r}{pm+r} \equiv_{p^2}
\binom{n+m}{m}\left(1+n\left(\binom{p+r}{r}-1\right)\right) \end{split} \\
\label{L3}\begin{split} &\binom{2pm+2r}{pm+r} \equiv_{p^2}
\binom{2m}{m}\left( \binom{2r}{r}
      +2m\binom{p+2r}{r}-2m\binom{2r}{r}\right), \end{split} \\
\label{L4}\begin{split} 
&\binom{3pm+3r}{pm+r}\equiv_{p^2}
\binom{3m}{m}\left(2m\binom{p+3r}{r}+m\binom{p+3r}{2r}
-(3m-1)\binom{3r}{r}\right)\\
&\qquad\qquad\qquad\qquad
+\binom{3m}{m-1}\left(\binom{3r}{p+r}
+(m-1)\binom{p+3r}{2p+r}-3m\binom{3r}{p+r}\right). \end{split} \end{align}
Also, $\binom{pn}{3pm+3r}\equiv_{p^3}\frac{pn}{3pm+3r}U_r$ where
\begin{align} \label{L5}
\begin{split}
U_r\equiv_{p^2}&(3m+1)\binom{n-1}{3m+1}\left[\binom{2p-1}{3r-1}-\binom{p-1}{3r-1}
-\binom{p-1}{3r-1-p}\right]  \\
& + (3m+2)\binom{n-1}{3m+2}\left[\binom{2p-1}{3r-1-p}-\binom{p-1}{3r-1-p}
-\binom{p-1}{3r-1-2p}\right] \\
&+(3m+3)\binom{n-1}{3m+3}\left[\binom{2p-1}{3r-1-2p}-\binom{p-1}{3r-1-2p}\right]
 \\
&+3m\binom{n-1}{3m}\left[\binom{2p-1}{p+3r-1}-\binom{p-1}{3r-1}\right] \\
&+\binom{n-1}{3m}\binom{p-1}{3r-1}+\binom{n-1}{3m+1}\binom{p-1}{3r-1-p}+\binom{n-1}{3m+2}\binom{p-1}{3r-1-2p}. \end{split}
\end{align}
\end{lemma}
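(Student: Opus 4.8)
The plan is to read every binomial coefficient in the lemma off as a coefficient of a power of $(1+x)$ and to extract it after expanding the factors that carry a power of $p$ in the exponent. The single tool driving everything is the coefficient-wise expansion, valid for any integer $N\geq 1$,
\[
(1+x)^{Np}\equiv_{p^2}(1+x^p)^{N}+N(1+x^p)^{N-1}\sum_{k=1}^{p-1}\binom{p}{k}x^{k},
\]
obtained by writing $(1+x)^p=(1+x^p)+h(x)$ with $h(x)=\sum_{k=1}^{p-1}\binom{p}{k}x^{k}$, observing that every coefficient of $h$ is divisible by $p$, and discarding $h^2,h^3,\dots$, which are divisible by $p^2$. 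Combined with the rewrites $(a-b)\binom{a}{b}=a\binom{a-1}{b}$ and $b\binom{a}{b}=a\binom{a-1}{b-1}$, this already gives \eqref{L1}: for $0<j<p$ the term $(1+x^p)^{a}$ contributes nothing to $x^{bp+j}$ or $x^{bp-j}$, while the correction term contributes the single coefficient $a\binom{a-1}{b}\binom{p}{j}$ to $x^{bp+j}$ and $a\binom{a-1}{b-1}\binom{p}{j}$ to $x^{bp-j}=x^{(b-1)p+(p-j)}$.

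For \eqref{L2}, \eqref{L3}, \eqref{L4} I would write each binomial as a coefficient in a product $(1+x)^{(\cdot)p}(1+x)^{(\mathrm{low})}$ — for instance $\binom{3pm+3r}{pm+r}$ is the coefficient of $x^{pm+r}$ in $(1+x)^{3pm}(1+x)^{3r}$ — apply the displayed expansion to the first factor, and collect the coefficient of $x^{pm+r}$. The leading piece $(1+x^p)^{N}(1+x)^{(\mathrm{low})}$ produces the ``plain'' terms (those involving $\binom{3r}{r}$, $\binom{3r}{p+r}$, and so on), while the correction piece produces, for each value of the $p$-carry, an inner sum that I collapse by Vandermonde's convolution,
\[
\sum_{k=1}^{p-1}\binom{p}{k}\binom{L}{M-k}=\binom{p+L}{M}-\binom{L}{M}-\binom{L}{M-p}.
\]
What remains is bookkeeping: binomials whose lower index leaves $[0,\mathrm{upper}]$ vanish automatically, the prefactors are rewritten via $3m\binom{3m-1}{m}=2m\binom{3m}{m}$, $3m\binom{3m-1}{m-1}=m\binom{3m}{m}$, $3m\binom{3m-1}{m-2}=(m-1)\binom{3m}{m-1}$, $2m\binom{2m-1}{m}=m\binom{2m}{m}$, $(n+m)\binom{n+m-1}{m}=n\binom{n+m}{m}$, and one uses the symmetry $\binom{p+3r}{p+r}=\binom{p+3r}{2r}$. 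A single application of $(2m+1)\binom{3m}{m-1}=m\binom{3m}{m}$ reconciles the two possible ways of distributing the $\binom{3r}{p+r}$ contributions between $\binom{3m}{m}$ and $\binom{3m}{m-1}$ and matches \eqref{L4} exactly.

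Finally, \eqref{L5} is the delicate case and the one I expect to be the main obstacle, both because the modulus is $p^3$ and because the carry bookkeeping is heavier. The apparent jump in precision dissolves at once: the exact factorization $\binom{pn}{3pm+3r}=\frac{pn}{3pm+3r}\binom{pn-1}{3pm+3r-1}$ together with $r\geq 1$ (so that $\frac{pn}{3pm+3r}$ supplies exactly one power of $p$) shows it suffices to compute $U_r=\binom{pn-1}{3pm+3r-1}$ only modulo $p^2$. Writing $pn-1=p(n-1)+(p-1)$, $U_r$ is the coefficient of $x^{3pm+3r-1}$ in $(1+x)^{p(n-1)}(1+x)^{p-1}$, and applying the expansion to $(1+x)^{p(n-1)}$ splits it into the leading contribution $\sum_{s=0}^{2}\binom{n-1}{3m+s}\binom{p-1}{3r-1-ps}$ (the last line of $U_r$, one term per value $s=\lfloor(3r-1)/p\rfloor$) and, from the correction piece, one bracketed term $\binom{2p-1}{pe+3r-1}-\binom{p-1}{pe+3r-1}-\binom{p-1}{p(e-1)+3r-1}$ for each carry $e=3m-c$, with prefactor $(n-1)\binom{n-2}{3m-e}=(3m+1-e)\binom{n-1}{3m+1-e}$. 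Verifying that precisely the four carries $e\in\{-2,-1,0,1\}$ survive — every other choice forcing an out-of-range lower index — and that the four surviving brackets coincide term by term with those in \eqref{L5} is the crux; the hardest part is keeping straight the edge cases of the carries, where one of the three subtracted binomials silently vanishes, so that each closed form lands exactly on the stated expression.
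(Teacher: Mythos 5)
Your proposal is correct and is essentially the paper's first proof in generating-function clothing: the expansion $(1+x)^{Np}\equiv_{p^2}(1+x^p)^N+N(1+x^p)^{N-1}\sum_{k=1}^{p-1}\binom{p}{k}x^k$ packages in one stroke both the basic congruence \eqref{L1} and the Vandermonde--Chu convolutions that the paper applies term by term, and your carry bookkeeping for \eqref{L2}--\eqref{L4} (including the final reconciliation via $(2m+1)\binom{3m}{m-1}=m\binom{3m}{m}$) lands exactly on the stated formulas. A modest bonus of your write-up is that it actually derives \eqref{L5} --- with the correct observation that $p\nmid 3(pm+r)$ for $1\leq r<p$ reduces the task to computing $U_r$ modulo $p^2$, and with the correct surviving carries $e\in\{-2,-1,0,1\}$ --- whereas the paper's first proof dismisses this case as ``analogous'' and its alternative proof substitutes the variant form \eqref{L5V2}.
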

\begin{proof} For \eqref{L1}, we have
\begin{align*} 
\binom{ap}{bp+j}=\binom{ap}{bp}\frac{(a-b)p}{bp+j}\prod_{k=1}^{j-1}
\frac{(a-b)p-k}{bp+k}\equiv_{p^2}
(a-b)\binom{a}{b}\frac{p(-1)^{j-1}}{j}
\equiv_{p^2}
(a-b)\binom{a}{b}\binom{p}{j},
\end{align*}
and therefore
$$\binom{ap}{bp-j}=\binom{ap}{(a-b)p+j}\equiv_{p^2}
b\binom{a}{b}\binom{p}{j}.$$
For \eqref{L2}, use Vandermonde-Chu's identity and \eqref{L1} so that
\begin{align*} 
\binom{p(n+m)+r}{pm+r}&=\sum_{j=0}^r \binom{p(n+m)}{pm+j}\binom{r}{r-j}\\
&\equiv_{p^2}\binom{n+m}{m}+n\binom{n+m}{m}\sum_{j=1}^r\binom{p}{j}\binom{r}{r-j}\\
&\equiv_{p^2}\binom{n+m}{m}\left(1+n\left(\binom{p+r}{r}-1\right)\right).
\end{align*}
In a similar way, we prove \eqref{L3} as follows:
\begin{align*} 
\binom{2pm+2r}{pm+r}&=\sum_{j=-r}^r \binom{2pm}{pm+j}\binom{2r}{r-j}\\
&=\binom{2pm}{pm}\binom{2r}{r}
+\sum_{j=1}^r \binom{2pm}{pm+j}\binom{2r}{r-j}
+\sum_{j=1}^r \binom{2pm}{pm-j}\binom{2r}{r+j}\\
&\equiv_{p^2}\binom{2m}{m}\left(
\binom{2r}{r}+m\sum_{j=1}^r\binom{p}{j}\binom{2r}{r-j}
+m\sum_{j=1}^r\binom{p}{p-j}\binom{2r}{r+j}\right)\\
&\equiv_{p^2}
\binom{2m}{m}\left( \binom{2r}{r}
      +2m\left(\binom{p+2r}{r}-\binom{2r}{r}\right)\right).
\end{align*}
Moreover,
\begin{align*} 
\binom{3pm+3r}{pm+r}&=\sum_{j=-2r}^r \binom{3pm}{pm+j}\binom{3r}{r-j}\\
&=\binom{3pm}{pm}\binom{3r}{r}
+\sum_{j=1}^r \binom{3pm}{pm+j}\binom{3r}{r-j}
+\sum_{j=1}^{2r} \binom{3pm}{pm-j}\binom{3r}{r+j}\\
&\equiv_{p^2}\binom{3m}{m}\left(
\binom{3r}{r}+2m\left(\binom{p+3r}{r}-\binom{3r}{r}\right)
\right)
+\sum_{j=1}^{2r} \binom{3pm}{pm-j}\binom{3r}{r+j}.
\end{align*}
Now, \eqref{L4} is equal to
\begin{align*} 
&\sum_{j=1}^{p-1} \binom{3pm}{pm-j}\binom{3r}{r+j}
+\binom{3pm}{pm-p}\binom{3r}{r+p}+\sum_{j=p+1}^{2r} 
\binom{3pm}{pm-j}\binom{3r}{r+j}\\
&\qquad
=\sum_{j=1}^{p-1} \binom{3pm}{pm-j}\binom{3r}{r+j}
+\binom{3pm}{pm-p}\binom{3r}{r+p}
+\sum_{j=1}^{2r-p} \binom{3pm}{p(m-1)-j}\binom{3r}{r+p+j}\\
&\qquad
\equiv_{p^2}m\binom{3m}{m}\left(\binom{p+3r}{p+r}-\binom{3r}{r}-\binom{3r}{r+p}\right)
+\binom{3m}{m-1}\binom{3r}{r+p}\\
&\qquad\qquad
+(m-1)\binom{3m}{m-1}\sum_{j=1}^{2r-p} \binom{p}{p-j}\binom{3r}{r+p+j}\\
&\qquad
\equiv_{p^2}m\binom{3m}{m}\left(\binom{p+3r}{2r}-\binom{3r}{r}-\binom{3r}{p+r}\right)
+\binom{3m}{m-1}\binom{3r}{p+r}\\
&\qquad\qquad
+(m-1)\binom{3m}{m-1}\left(\binom{p+3r}{2p+r}-\binom{3r}{p+r}\right).
\end{align*}
The proof of the last congruence in \eqref{L5} is analogous and hence is omitted here.
\end{proof}

\begin{proof} We provide an alternative proof of Lemma \ref{L51} by reviving certain results found in ~\cite{BS} as equations (26) and (27), respectively. These are stated follows. If $n=n_1p+n_0$ and $k=k_1p+k_0$ where $0<n_0, k_0<p$ then
\begin{align}\label{Sagan1}\binom{np}{k}\equiv_{p^2}n\binom{n-1}{k_1}\binom{p}{k_0}, \end{align}
\begin{align}\label{Sagan2} \binom{n}{k}\equiv_{p^2}\binom{n_1}{k_1}\left[(1+n_1)\binom{n_0}{k_0}-(n_1+k_1)\binom{n_0-p}{k_0}-k_1\binom{n_0-p}{k_0+p}\right]. \end{align}
For \eqref{L1} of the lemma, apply \eqref{Sagan1} with $n_1=a, n_0=0, k_1=b, k_0=j$. So,
\begin{align*} \binom{ap}{bp+j}\equiv_{p^2}a\binom{a-1}{b}\binom{p}{j}=(a-b)\binom{a}{b}\binom{p}{j}. \end{align*}
For \eqref{L2}, apply \eqref{Sagan2} with $n_1=n+m, n_0=r=k_0, k_1=m$. So,
\begin{align*} \binom{p(n+m)+r}{pm+r}\equiv_{p^2}\binom{n+m}{m}
\left[(1+m+n)\binom{r}{r}-(n+2m)\binom{r-p}{r}-m\binom{r-p}{r+p}\right]
\end{align*}
To put this in the desired format consider applying \eqref{Sagan2} to $\binom{p+r}{r}\equiv_{p^2}2-\binom{r-p}{p}$ (with $n_1=1, n_0=k_0=r, k_1=0$); to $\binom{r-p}{r+p}=\binom{-p+r}{-2p}\equiv_{p^2}-3+2\binom{r-p}{r}$ (with $n_1=-1, n_0=r, k_1=-2, k_0=0$). After substitution and simplifications, the desired outcome is reached.

\smallskip
\noindent
For \eqref{L3}, apply \eqref{Sagan2} with $n_1=2m, n_0=2r, k_1=m, k_0=r$. So,
\begin{align*} \binom{2pm+2r}{pm+r}\equiv_{p^2}\binom{2m}{m}
\left[(1+2m)\binom{2r}{r}-3m\binom{2r-p}{r}-m\binom{2r-p}{r+p}\right]. \end{align*}
Let's reformulate this to get the result as stated in the lemma. To this end, employ \eqref{Sagan2} to $\binom{p+2r}{r}\equiv_{p^2}2\binom{2r}{r}-\binom{2r-p}{r}$ (with $n_1=1, n_0=2r, k_1=0, k_0=r$); to $\binom{p+2r}{r}=\binom{p+2r}{p+r}\equiv_{p^2}2\binom{2r}{r}-2\binom{2r-p}{r}-\binom{2r-p}{r+p}$ (with $n_1=k_1=1, n_0=2r, k_0=r$). Routine substitution completes the argument. 

\smallskip
\noindent
The congruence \eqref{L4} demands a careful analysis. The setup begins by expressing $3r=\epsilon p+d$ where $0<d<p$ and $\epsilon\in\{0,1,2\}$ which  correspond to $0<3r<p, p<3r<2p$ and $2p<3r<3p$, respectively. Here, $\epsilon=\lfloor\frac{3r}{p}\rfloor$

\smallskip
\noindent
Let $n_1=3m+\epsilon, n_0=d, k_1=m, k_0=r$ and implement \eqref{Sagan2}. So,
\begin{align*} \binom{p(3m+\epsilon)+d}{pm+r}\equiv_{p^2} \binom{3m+\epsilon}{m}
\left[(3m+\epsilon+1)\binom{d}{r}-(4m+\epsilon)\binom{d-p}{r}-m\binom{d-p}{r+p}\right]. \end{align*}
Next, engage \eqref{Sagan1} with (with $n_1=\epsilon, n_0=d, k_1=0, k_0=r$ to get
$$\binom{3r}{r}=\binom{\epsilon p+d}{r}\equiv_{p^2}(\epsilon+1)\binom{d}{r}-\epsilon\binom{d-p}{r};$$ 
with $n_1=\epsilon+1, n_0=d, k_1=0, k_0=r$ to get
$$\binom{p+3r}{r}=\binom{(\epsilon+1)p+d}{r}\equiv_{p^2}(\epsilon+2)\binom{d}{r}-(\epsilon+1)\binom{d-p}{r};$$
with $n_1=\epsilon+1, n_0=d, k_1=1, k_0=r$ to get
$$\binom{p+3r}{2r}=\binom{(\epsilon+1)p+d}{p+r}\equiv_{p^2}(\epsilon+1)(\epsilon+2)\binom{d}{r}-(\epsilon+1)(\epsilon+2)\binom{d-p}{r}
-(\epsilon+1)\binom{d-p}{r+p}.$$
After proper substitutions, the result becomes
\begin{align*} \binom{3pm+3r}{pm+r} \equiv_{p^2}&\binom{3m+\epsilon}{m}\binom{3r}{r} \\
+&\binom{3m+\epsilon}{m}\left(m\left[\frac1{\epsilon+1}\binom{p+3r}{2r}-\binom{3r}{r}\right]
+2m\left[\binom{p+3r}{r}-\binom{3r}{r}\right]\right).  \end{align*}
For \eqref{L5}, apply \eqref{Sagan1} with $n_1=n-1, n_0=p-1, k_1=3m+\epsilon, k_0=d-1$. Follow this through using $\binom{-1}{j}=(-1)^j$. The outcome is:
\begin{align} \label{L5V2} \begin{split} \binom{pn}{3pm+3r}&=\frac{pn}{3pm+3r}\binom{p(n-1)+p-1}{p(3m+\epsilon)+d-1} \\ 
&\equiv_{p^3}\frac{pn}{3pm+3r}\binom{n-1}{3m+\epsilon}\left[n\binom{p-1}{3r-1-\epsilon p}+(-1)^{r-\epsilon}(n-1)\right]. \end{split}
\end{align}
Although doable, we opt to leave this congruence in its present form instead of committing to transform it into \eqref{L5} because \eqref{L5V2} will be more convenient for our subsequent calculations.
\end{proof}

\begin{corollary}\label{C52} 
For $p>3$ a prime and an integer $0\leq r<p$, we have the congruence
\begin{align*} 
\binom{3pm+3r}{pm+r}\binom{2pm+2r}{pm+r} 
&\equiv_{p^2}\binom{3m}{m,m,m}\binom{3r}{r,r,r}\left[1+3pm(H_{3r}-H_r)\right].
\end{align*}
\end{corollary}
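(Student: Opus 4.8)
The corollary is, at bottom, the product of the two expansions \eqref{L3} and \eqref{L4}; the direct route is to multiply them and simplify, but I would instead organize the computation through a factorial decomposition that treats all residue ranges of $r$ uniformly. The starting point is that the left-hand side is the single trinomial coefficient $\frac{(3pm+3r)!}{((pm+r)!)^3}$ (by $\binom{3n}{n}\binom{2n}{n}=\binom{3n}{n,n,n}$) and that the leading term on the right is $\frac{(3m)!\,(3r)!}{(m!)^3(r!)^3}$. Writing $N!=p^{\lfloor N/p\rfloor}\lfloor N/p\rfloor!\,P(N)$ with $P(N)=\prod_{1\le k\le N,\,p\nmid k}k$, and setting $3r=\epsilon p+d$ with $\epsilon=\lfloor 3r/p\rfloor\in\{0,1,2\}$ and $0\le d<p$, the powers of $p$ separate at once:
$$\frac{(3pm+3r)!}{((pm+r)!)^3}=p^{\epsilon}\,\frac{(3m+\epsilon)!}{(m!)^3}\cdot\frac{P(3pm+3r)}{P(pm+r)^3}.$$

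The plan is then to evaluate the $P$-factors modulo $p^2$. I would use the block estimate $\prod_{l=1}^{p-1}(ip+l)\equiv_{p^2}(p-1)!$, which comes from $\prod_{l=1}^{p-1}(ip+l)\equiv_{p^2}(p-1)!\,(1+ip\,H_{p-1})$ and Wolstenholme's congruence $H_{p-1}\equiv_{p^2}0$, together with the partial-block corrections $\prod_{l=1}^{r}(mp+l)\equiv_{p^2}r!\,(1+mp\,H_r)$ and $\prod_{l=1}^{d}((3m+\epsilon)p+l)\equiv_{p^2}d!\,(1+(3m+\epsilon)p\,H_d)$. Since $d,r<p$ the harmonic numbers $H_d,H_r$ are pole-free, so collecting everything yields
$$\frac{(3pm+3r)!}{((pm+r)!)^3}\equiv_{p^2} p^{\epsilon}\bigl((p-1)!\bigr)^{\epsilon}\,\frac{(3m+\epsilon)!}{(m!)^3}\cdot\frac{d!}{(r!)^3}\bigl(1+(3m+\epsilon)p\,H_d-3mp\,H_r\bigr).$$
Applying the same decomposition to $\binom{3r}{r,r,r}=\frac{(3r)!}{(r!)^3}$ recasts the right-hand side of the claim in the identical shape, reducing everything to a comparison of the two harmonic correction factors.

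The final and most delicate step is to match $1+(3m+\epsilon)p\,H_d-3mp\,H_r$ against the asserted $1+3pm(H_{3r}-H_r)$. The key bridge is the congruence $pH_{3r}\equiv_{p^2}H_\epsilon+pH_d$, got by splitting $H_{3r}$ into its $\epsilon$ multiples of $p$ (which contribute $\tfrac1pH_\epsilon$), its full residue blocks (which vanish after multiplication by $p$, again by Wolstenholme), and its terminal partial block (which contributes $H_d$); this turns $3pm(H_{3r}-H_r)$ into $3mH_\epsilon+3pm(H_d-H_r)$ and lets the mismatch between $(3m+\epsilon)!$ and $(3m)!$, the prefactor $p^\epsilon$, and the Wilson units $((p-1)!)^\epsilon$ cancel against the decomposition of $\binom{3r}{r,r,r}$. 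I expect the genuine obstacle to sit exactly here: once $\epsilon\ge 1$ the harmonic number $H_{3r}$ acquires a pole at $p$ while $\binom{3r}{r,r,r}$ carries a compensating factor $p^{\epsilon}$, so the naive termwise reduction $\binom{p+a}{b}-\binom{a}{b}\equiv_{p^2}p\binom{a}{b}(H_a-H_{a-b})$ of the individual binomials \emph{fails} modulo $p^2$ (the cross-terms are no longer negligible), and one must instead carry the exact block decomposition and track precision down to order $p^{2-\epsilon}$. For $\epsilon=0$, i.e. $3r<p$, all of this collapses and the corollary falls straight out of multiplying \eqref{L3} and \eqref{L4} and cancelling the $H_{2r}$ contributions.
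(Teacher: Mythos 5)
Your argument is correct, but it is organized quite differently from the paper's. The paper never splits $3r$ into $\epsilon p+d$: it writes the left-hand side as $\binom{3pm}{pm,pm,pm}\prod_{j=1}^{3r}(3pm+j)\prod_{k=1}^{r}(pm+k)^{-3}$, replaces each unit $(pm+k)^{-3}$ by $k^{-4}(k-3pm)$ modulo $p^2$, and then reads off both shifted products at once from $\prod_{i}(\lambda_i+x)=\sum_j e_j(\lambda)x^{n-j}$ together with $e_{n-1}(1,\dots,n)=n!\,H_n$; the only outside input is $\binom{3pm}{pm,pm,pm}\equiv_{p^2}\binom{3m}{m,m,m}$. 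This treats all residue ranges of $r$ uniformly: the apparent pole of $H_{3r}$ when $3r\ge p$ never arises, because $3pm\,(3r)!\,H_{3r}$ is just $3pm\,e_{3r-1}(1,\dots,3r)$, an integer, and all higher terms carry $(3pm)^2$. Your Wilson/Wolstenholme block decomposition with the case split on $\epsilon=\lfloor 3r/p\rfloor$ and the bridge $pH_{3r}\equiv_{p^2}H_\epsilon+pH_d$ does work --- the precision loss from the prefactor $p^{\epsilon}$ is exactly what the congruence can afford, and the residual mismatch between $\prod_{i=1}^{\epsilon}(3m+i)$ and $\epsilon!\left(1+3mH_\epsilon\right)$ dies modulo $p^{2-\epsilon}$ --- but it costs you three cases, an appeal to Wolstenholme for every full block, and a final reassembly that you only sketch and that deserves to be written out. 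What you correctly diagnose, namely that termwise multiplication of \eqref{L3} and \eqref{L4} becomes delicate once $3r\ge p$, is precisely what the paper's symmetric-function bookkeeping is designed to sidestep.
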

\begin{proof} This is a consequence of Lemma \ref{L51} and \eqref{Sagan2}. However, we offer a more direct approach.  Since $(pm+k)^{-1}\equiv_{p^2}\frac1k\left(1-\frac{pm}k\right)$, we obtain $(pm+k)^{-3}\equiv_{p^2}\frac1{k^3}\left(1-\frac{pm}k\right)^3
\equiv_{p^2}\frac1{k^3}\left(1-\frac{3pm}{k}\right)=\frac1{k^4}(k-3pm)$. For notational simplicity, denote $\binom{3j}{j,j,j}=\binom{3j}{j}\binom{2j}{j}$ by $\binom{3j}{j^3}$. We consider the expansion $\prod_{i=1}^{n}(\lambda_i+x)=\sum_{j=0}^ne_j(\lambda)x^{n-j}$ as our running theme, where $e_j$ is the $j$-th \emph{elementary symmetric function} in the parameters $\lambda=(\lambda_1,\dots,\lambda_n)$. In particular, $e_n=1$ and $e_{n-1}(1,\dots,n)=n!H_n$. The claim then follows from
\begin{align*}
\binom{3pm+3r}{(pm+r)^3}&=\binom{3pm}{(pm)^3}\prod_{j=1}^{3r}(j+3pm)\prod_{k=1}^r(pm+k)^{-3} \\
&\equiv_{p^2}\binom{3pm}{(pm)^3}\frac1{r!^4}\prod_{j=1}^{3r}(j+3pm)\prod_{k=1}^r(k-3pm) \\
&\equiv_{p^2}\binom{3pm}{(pm)^3}\frac1{r!^4}\,(3r)!r!\left[1+3pmH_{3r}-3pmH_r\right].
\end{align*}
\end{proof}

\noindent
This fact is even more general as stated below but its proof is left to the interested reader.
\begin{xca} If $A>0, 0\leq r<p$ are integers and $p>3$ a prime, then
\begin{align*} 
\binom{Apm+Ar}{pm+r,\dots,pm+r}&:=\frac{(Apm+Ar)!}{(pm+r)!^A}
\equiv_{p^2}\binom{Am}{m,\cdots,m}\binom{Ar}{r,\cdots,r}\left[1+Apm(H_{Ar}-H_r)\right].
\end{align*}
\end{xca}

\begin{corollary}\label{C54} 
For $p>3$ a prime and an integer $0\leq r<p$, we have
\begin{align*} \binom{p(n+m)+r}{pm+r}\equiv_{p^2}\binom{n+m}{m}[1+pnH_r].
\end{align*}
\end{corollary}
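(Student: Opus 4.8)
The plan is to derive Corollary \ref{C54} directly from \eqref{L2} of Lemma \ref{L51}, converting the binomial-coefficient expression $\binom{p+r}{r}-1$ into the harmonic-number form $pnH_r$. First I would recall that \eqref{L2} gives
\begin{align*}
\binom{p(n+m)+r}{pm+r}\equiv_{p^2}\binom{n+m}{m}\left(1+n\left(\binom{p+r}{r}-1\right)\right),
\end{align*}
so the entire task reduces to establishing the single congruence $\binom{p+r}{r}-1\equiv_{p^2}pH_r$, after which substituting and distributing the factor $n$ yields the claim. Since we are already working modulo $p^2$ and the correction term carries a factor of $n$, any $O(p^2)$ error inside the parenthesis is absorbed, so it suffices to pin down $\binom{p+r}{r}$ to first order in $p$.

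The key computation is to expand $\binom{p+r}{r}$ as a product. Writing
\begin{align*}
\binom{p+r}{r}=\frac{(p+r)!}{r!\,p!}=\prod_{k=1}^{r}\frac{p+k}{k}=\prod_{k=1}^{r}\left(1+\frac{p}{k}\right),
\end{align*}
I would expand the product and discard all terms of order $p^2$ or higher. The constant term is $1$, and the sum of the linear terms is $p\sum_{k=1}^{r}\frac1k=pH_r$. Hence $\binom{p+r}{r}\equiv_{p^2}1+pH_r$, which gives exactly $\binom{p+r}{r}-1\equiv_{p^2}pH_r$ as needed. Note that since $0\le r<p$, each factor $\frac{1}{k}$ is a $p$-adic unit and the denominators $k$ are invertible modulo $p^2$, so the manipulation is legitimate and $H_r$ is well defined modulo $p^2$.

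Substituting back into \eqref{L2}, I would write
\begin{align*}
\binom{p(n+m)+r}{pm+r}\equiv_{p^2}\binom{n+m}{m}\left(1+n\cdot pH_r\right)=\binom{n+m}{m}\left[1+pnH_r\right],
\end{align*}
completing the proof. The argument is entirely routine; there is no real obstacle once \eqref{L2} is in hand, the only point requiring a moment's care being the verification that the product expansion truncates cleanly modulo $p^2$, which is immediate because the product has only $r<p$ factors so the quadratic contributions all carry a genuine $p^2$.
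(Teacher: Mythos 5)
Your proof is correct and matches the paper's argument exactly: the paper likewise observes that $\binom{p+r}{r}=\frac{1}{r!}\prod_{j=1}^{r}(p+j)\equiv_{p^2}1+pH_r$ and then substitutes into \eqref{L2} of Lemma \ref{L51}. Nothing further is needed.
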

\begin{proof} It is easy to check that $\binom{p+r}{r}=\frac1{r!}\prod_{j=1}^r(p+j)\equiv_{p^2}1+pH_r$. The rest follows from \eqref{L2} of Lemma \ref{L51}.
\end{proof}

\begin{corollary}\label{C55} 
Let $N=n-3m$. For $p>3$ a prime and an integer $0< r<p$, it holds that
\begin{align*} \binom{pn}{3pm+3r}\equiv_{p^3}
\left(\frac{p}{3r}-\frac{p^2m}{3r^2}\right)(-1)^{r}\binom{n}{3m}\cdot 
\begin{cases} 
\displaystyle N(-1+pnH_{3r-1}), &\mbox{if } 0<r< \frac{p}3 \vspace{2mm}\\
\displaystyle\binom{N}{2}\frac{2(1-pnH_{3r-1-p})}{3m+1}, 
&\mbox{if } \frac{p}3<r< \frac{2p}3 \vspace{2mm}\\
\displaystyle \binom{N}{3}\frac{6(-1+pnH_{3r-1-2p})}{(3m+1)(3m+2)}, 
&\mbox{if } \frac{2p}3<r<p. \end{cases} 
\end{align*}
\end{corollary}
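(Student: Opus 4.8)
The plan is to extract all three cases directly from the congruence \eqref{L5V2} established in Lemma \ref{L51},
\[
\binom{pn}{3pm+3r}\equiv_{p^3}\frac{pn}{3pm+3r}\binom{n-1}{3m+\epsilon}\left[n\binom{p-1}{3r-1-\epsilon p}+(-1)^{r-\epsilon}(n-1)\right],
\]
where $\epsilon=\lfloor 3r/p\rfloor\in\{0,1,2\}$ singles out exactly the three ranges $0<r<p/3$, $p/3<r<2p/3$, and $2p/3<r<p$. Since the right-hand side already carries one factor of $p$, it is enough to simplify the prefactor $\frac{pn}{3pm+3r}\binom{n-1}{3m+\epsilon}$ modulo $p^3$ and the bracket modulo $p^2$, and then multiply.

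First I would treat the prefactor. From $\frac{p}{3pm+3r}=\frac{p}{3(pm+r)}$ together with $(pm+r)^{-1}\equiv_{p^2}\frac1r\bigl(1-\frac{pm}{r}\bigr)$ one reads off the common factor $\frac{p}{3r}-\frac{p^2m}{3r^2}$ that appears in every branch. The binomial $\binom{n-1}{3m+\epsilon}$ is then converted to $\binom{n}{3m}$ by the elementary factorial-ratio identity
\[
\binom{n-1}{3m+\epsilon}=\frac{N(N-1)\cdots(N-\epsilon)}{n\,(3m+1)\cdots(3m+\epsilon)}\binom{n}{3m},\qquad N=n-3m,
\]
whose numerator equals $(\epsilon+1)!\binom{N}{\epsilon+1}$ and whose denominator is $1$ when $\epsilon=0$; the spurious $1/n$ cancels against the factor $n$ inside $pn$. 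This is precisely what manufactures $N$, $\frac{2\binom{N}{2}}{3m+1}$, and $\frac{6\binom{N}{3}}{(3m+1)(3m+2)}$ in the three cases, including the numerical constants $1,2,6$.

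Next I would collapse the bracket. Setting $d=3r-\epsilon p$, so that $3r-1-\epsilon p=d-1$ with $0\le d-1<p$, the standard expansion $\binom{p-1}{d-1}\equiv_{p^2}(-1)^{d-1}(1-pH_{d-1})$ applies, and $H_{d-1}=H_{3r-1-\epsilon p}$ is exactly the harmonic number occurring in each branch. Because $p$ is odd we have $(-1)^{d-1}=(-1)^{3r-1-\epsilon p}=(-1)^{r+\epsilon+1}$ while $(-1)^{r-\epsilon}=(-1)^{r+\epsilon}$, so after the $n$-terms cancel the bracket collapses uniformly to
\[
n\binom{p-1}{d-1}+(-1)^{r-\epsilon}(n-1)\equiv_{p^2}(-1)^{r}\bigl((-1)^{\epsilon+1}+(-1)^{\epsilon}\,pnH_{d-1}\bigr),
\]
which reads $-1+pnH_{3r-1}$ for $\epsilon=0$, $1-pnH_{3r-1-p}$ for $\epsilon=1$, and $-1+pnH_{3r-1-2p}$ for $\epsilon=2$. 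Multiplying the simplified prefactor by the simplified bracket reproduces the three branches of the stated congruence.

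The argument is essentially mechanical once \eqref{L5V2} is available, so I anticipate no genuine obstacle, only the bookkeeping of $p$-adic precision—every cofactor must be carried to modulo $p^2$ to secure the modulo $p^3$ conclusion—and the careful parity comparison of $(-1)^{d-1}$ against $(-1)^{r-\epsilon}$ in each of the three ranges, which is where a sign error would most easily creep in.
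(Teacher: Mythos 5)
Your proposal is correct and follows essentially the same route as the paper: the authors likewise start from the congruence \eqref{L5V2}, apply $\frac{1}{3pm+3r}\equiv_{p^2}\frac{1}{3r}-\frac{pm}{3r^2}$ and $\binom{p-1}{j}\equiv_{p^2}(-1)^j(1-pH_j)$, and absorb $\binom{n-1}{3m+\epsilon}$ into $\binom{n}{3m}$; you have merely written out the bookkeeping (the factorial-ratio conversion and the sign comparison) that the paper leaves implicit.
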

\begin{proof} We continue where we left off \eqref{L5V2} with $\epsilon=\lfloor\frac{3r}p\rfloor$. That is,
\begin{align*} \binom{pn}{3pm+3r}
&\equiv_{p^3}\frac{pn}{3pm+3r}\binom{n-1}{3m+\epsilon}\left[n\binom{p-1}{3r-1-\epsilon p}+(-1)^{r-\epsilon}(n-1)\right]. 
\end{align*}
Combining this step and the easy facts $\frac1{3pm+3r}\equiv_{p^2}\frac1{3r}-\frac{pm}{3r^2}, \binom{p-1}{j}\equiv_{p^2}(-1)^j[1-pH_j]$, we reach the desired conclusion.
\end{proof}

\begin{lemma}\label{L57} If $p>3$ is a prime then
\begin{align}\label{T1}
&\sum_{r=1}^{p-1}
\binom{3r}{r,r,r}\frac{3^{-3r}}{r}
\equiv_{p^2}
-3q_p(1/3)+\frac{3p}{2}q_p(1/3)^2, \\
\label{T2}
&\sum_{r=1}^{p-1}
\binom{3r}{r,r,r}\frac{3^{-3r}}{r^2}
\equiv_{p}
-\frac{9}{2}q_p(1/3)^2,\\
\label{T3}
&\sum_{r=1}^{p-1}
\binom{3r}{r,r,r}\frac{(H_{3r}-H_{r})3^{-3r}}{r}
\equiv_{p} 0.
\end{align}
\end{lemma}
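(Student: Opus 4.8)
The engine of the proof is the elementary Pochhammer identity
$$\binom{3r}{r,r,r}3^{-3r}=\frac{1}{3^{2r}(r!)^2}\prod_{j=1}^r(3j-1)(3j-2)=\binom{-1/3}{r}\binom{-2/3}{r},$$
valid as rational numbers, together with a parametric refinement of \eqref{Mort}. Writing $n=\lfloor p/3\rfloor$ and expanding the right-hand side of \eqref{Mort} as $\frac1y\prod_{j=1}^n\frac{1-y/j}{1+y/j}=\frac1y\exp\!\big(-2\sum_{s\ge1}\tfrac1{2s-1}H_n^{(2s-1)}y^{2s-1}\big)$ (with $H_n^{(s)}=\sum_{j=1}^nj^{-s}$), and matching the coefficients of $y^0$ and $y^1$ against the Taylor expansion of $\sum_k(-1)^k\binom nk\binom{n+k}k(k+y)^{-1}$, I read off the two \emph{exact} identities
$$\sum_{k=1}^n(-1)^k\binom nk\binom{n+k}k\frac1k=-2H_n,\qquad \sum_{k=1}^n(-1)^k\binom nk\binom{n+k}k\frac1{k^2}=-2H_n^2.$$
By Lemma \ref{PC} these transfer, modulo $p$, to the multinomial sums truncated at $r\le n$.

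The congruence \eqref{T2} is the most direct. For $n<r<p$ one has $v_p\binom{3r}{r,r,r}\ge1$ by Kummer/Legendre, while $1/r^2$ is a unit, so every term with $r>n$ drops modulo $p$ and the sum collapses to $\sum_{r=1}^n\binom{3r}{r,r,r}3^{-3r}/r^2\equiv_p-2H_n^2$ by Lemma \ref{PC} and the second identity above. Substituting the Lehmer evaluation $H_{\lfloor p/3\rfloor}\equiv_p-\tfrac32q_p(3)$ used in Corollary \ref{C27}, together with $q_p(3)\equiv_p-q_p(1/3)$ from \eqref{FT}, yields $-2\big(\tfrac32\big)^2q_p(1/3)^2=-\tfrac92q_p(1/3)^2$, which is \eqref{T2}.

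For \eqref{T1} I work modulo $p^2$, so the range $n<r\le\lfloor 2p/3\rfloor$, where $\binom{3r}{r,r,r}$ is divisible by $p$ exactly once, can no longer be discarded (the range $r>\lfloor 2p/3\rfloor$ still vanishes, since there $v_p\binom{3r}{r,r,r}\ge2$). On the regular range $r\le n$ I expand $\binom{-1/3}{r}\binom{-2/3}{r}$ about the integer binomials, using $-1/3\equiv n-p/3$ (for $p\equiv1$; the case $p\equiv2\pmod3$ is analogous) and the corresponding shift for $-2/3$, to obtain
$$\binom{3r}{r,r,r}3^{-3r}\equiv_{p^2}(-1)^r\binom nr\binom{n+r}r\Big(1-\tfrac{p}{3}\big(H_{n+r}-H_{n-r}\big)\Big),$$
so the leading term is the exact value $-2H_n$ and the first correction is a harmonic-weighted sum of the same type. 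On the middle range I isolate the single factor $p$ coming from the unique vanishing factor $-1/3-j\in p\mathbb{Z}_p$ in $\binom{-1/3}{r}$, reducing that block to $p$ times an explicit residue sum. Assembling the two blocks, invoking the mod-$p^2$ Lehmer congruence for $H_{\lfloor p/3\rfloor}$ together with the mod-$p$ value of $H_{\lfloor p/3\rfloor}^{(2)}$, and simplifying via \eqref{FT}, should produce $-3q_p(1/3)+\tfrac{3p}2q_p(1/3)^2$. The congruence \eqref{T3} is again modulo $p$, but the weight $H_{3r}-H_r$ now has a pole $1/p$ precisely when $n<r\le\lfloor2p/3\rfloor$, which pairs with the factor $p$ in $\binom{3r}{r,r,r}$ to keep those terms alive; I split it so that the block $r\le n$ is a $p$-regular harmonic-weighted sum over $n$ (evaluated by differentiating \eqref{Mort} in an auxiliary parameter $n\mapsto n+\epsilon$ and reusing the product formula), while the block $n<r\le\lfloor2p/3\rfloor$ contributes $\frac1p\sum_{n<r\le\lfloor2p/3\rfloor}\binom{3r}{r,r,r}3^{-3r}/r\pmod p$, the same residue sum appearing in \eqref{T1}, and the claim is that the two cancel.

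The main obstacle throughout is this middle range $\lfloor p/3\rfloor<r\le\lfloor2p/3\rfloor$: there the multinomial carries a factor $p$ and (in \eqref{T3}) the harmonic number a pole, so one must genuinely evaluate $\frac1p\sum\binom{3r}{r,r,r}3^{-3r}/r$ modulo $p$ rather than discard it. Writing $(3r)!=p\,(p-1)!\prod_{j\ge1}(p+j)$ and applying Wilson's theorem reduces each such term to an elementary residue, but the resulting sum does not obviously telescope to a harmonic number; the clean route is the $p$-adic Gamma function with the Gross--Koblitz/reflection formula, which is what ultimately ties this residue sum to $q_p(1/3)$ and $q_p(1/3)^2$ and renders the three congruences mutually consistent. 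I expect pinning down the exact rational constants $-3$, $\tfrac32$ and $-\tfrac92$ (hence the precise mod-$p^2$ Lehmer congruences and the sign coming from $q_p(3)\equiv_p-q_p(1/3)$) to be the most delicate part of the bookkeeping.
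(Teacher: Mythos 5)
Your argument for \eqref{T2} is complete and correct: since $\binom{3r}{r,r,r}\equiv_p 0$ for $r>\lfloor p/3\rfloor$, the sum truncates modulo $p$, Lemma \ref{PC} converts it to $\sum_{r=1}^{\lfloor p/3\rfloor}(-1)^r\binom{\lfloor p/3\rfloor}{r}\binom{\lfloor p/3\rfloor+r}{r}r^{-2}$, the coefficient of $y$ in the expansion of \eqref{Mort} gives the exact value $-2H_{\lfloor p/3\rfloor}^2$, and Lehmer's congruence finishes it. This is a genuinely different and more self-contained route than the paper's, which simply quotes the truncated-hypergeometric supercongruences (5) and (6) of \cite[Theorem 4]{T} for both \eqref{T1} and \eqref{T2} after rewriting $\binom{3r}{r,r,r}3^{-3r}=(1/3)_r(2/3)_r/(1)_r^2$ and using \eqref{FT} to pass from $q_p(1/27)$ to $q_p(1/3)$.

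For \eqref{T1} and \eqref{T3}, however, your proposal has a genuine gap, and you essentially admit it. Both congruences hinge on the middle block $\lfloor p/3\rfloor<r\le\lfloor 2p/3\rfloor$, where $\binom{3r}{r,r,r}$ is divisible by $p$ exactly once, so the quantity $p^{-1}\sum_{\mathrm{middle}}\binom{3r}{r,r,r}3^{-3r}/r \pmod p$ must actually be computed: it enters \eqref{T1} at order $p$, and in \eqref{T3} it is precisely the contribution that has to cancel against the regular block. You never evaluate it --- you note that it ``does not obviously telescope,'' gesture at the $p$-adic Gamma function and Gross--Koblitz, and say the constants ``should'' come out right; but that evaluation \emph{is} the content of the lemma, not bookkeeping. (Your proposed mod-$p^2$ refinement of Lemma \ref{PC} on the regular range is likewise asserted rather than proved, and needs separate care for $p\equiv\pm1\pmod 3$.) The paper avoids the split altogether: for \eqref{T3} it uses the contiguity identity \cite[Theorem 1, (1)]{T}, interchanges the order of summation, applies partial fractions together with $H_{p-1-k}\equiv_p H_k$ and $H_{p-1}\equiv_p 0$, and concludes that $\sum_r\binom{3r}{r,r,r}(3H_{3r}-H_r)3^{-3r}/r\equiv_p\sum_k\binom{3k}{k,k,k}2H_k3^{-3k}/k$, which, since $3H_{3r}-H_r=3(H_{3r}-H_r)+2H_r$, forces the $H_{3r}-H_r$ sum to vanish. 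Without an actual evaluation of the middle block (or an appeal to results of the type in \cite{T}), only \eqref{T2} is established by your argument.
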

\begin{proof} By \eqref{FT},
$q_p(1/27)\equiv_{p^2}3\,q_p(1/3)+3p\,q_p(1/3)^2$.
Therefore, by (5) in \cite[Theorem 4]{T},
\begin{align*}
\sum_{r=1}^{p-1}\binom{3r}{r,r,r}\frac{3^{-3r}}{r}
&=\sum_{r=1}^{p-1}\frac{(1/3)_r(2/3)_r}{(1)_r^2}\cdot\frac{1}{r}
\equiv_{p^2} -q_p(1/27)+\frac{p}{2}q_p(1/27)^2\\
&\equiv_{p^2} -3q_p(1/3)+\frac{3p}{2}q_p(1/3)^2.
\end{align*}
In a similar way, by (6) in \cite[Theorem 4]{T},
\begin{align*}
\sum_{r=1}^{p-1}\binom{3r}{r,r,r}\frac{3^{-3r}}{r^2}
&=\sum_{r=1}^{p-1}\frac{(1/3)_r(2/3)_r}{(1)_r^2}\cdot\frac{1}{r^2}
\equiv_{p} -\frac{1}{2}q_p(1/27)^2\equiv_{p} -\frac{9}{2}q_p(1/3)^2.\\
\end{align*}
By (1) in \cite[Theorem 1]{T},
$$\frac{(1/3)_r (2/3)_r}{(1)_r^2}
\sum_{j=0}^{r-1}\left(\frac{1}{1/3+j}+\frac{1}{2/3+j}
\right)=\sum_{k=0}^{r-1}\frac{(1/3)_k (2/3)_k}{(1)_k^2}
\cdot\frac{1}{r-k}.$$
Hence \eqref{T3} is implied by the following
\begin{align*}
\sum_{r=1}^{p-1}
\binom{3r}{r,r,r}\frac{(3H_{3r}-H_{r})3^{-3r}}{r}
&=\sum_{r=1}^{p-1} \frac{(1/3)_r (2/3)_r}{(1)_r^2}\cdot\frac{1}{r}
\cdot\sum_{j=0}^{r-1}\left(\frac{1}{1/3+j}+\frac{1}{2/3+j}\right)\\
&=\sum_{r=1}^{p-1}\frac1r
\sum_{k=0}^{r-1}\frac{(1/3)_k (2/3)_k}{(1)_k^2}\cdot\frac{1}{r-k}\\
&=
\sum_{k=0}^{p-2}\frac{(1/3)_k (2/3)_k}{(1)_k^2}
\sum_{r=k+1}^{p-1}\frac{1}{r(r-k)}\\
&=
\sum_{r=1}^{p-1}\frac{1}{r^2}+\sum_{k=1}^{p-2}\frac{(1/3)_k (2/3)_k}{(1)_k^2}
\left(\frac{1}{k}\sum_{r=k+1}^{p-1}\left(\frac{1}{r-k}-\frac{1}{r}\right)\right)\\
&\equiv_p
\sum_{k=1}^{p-2}\frac{(1/3)_k (2/3)_k}{(1)_k^2}
\cdot \frac{1}{k}\left(H_{p-1-k}-H_{p-1}+H_{k}\right)\\
&\equiv_p
\sum_{k=1}^{p-1}
\binom{3k}{k,k,k}\frac{2H_{k}\,3^{-3k}}{k},
\end{align*}
because $H_{p-1-k}\equiv_p H_k$ and $H_{p-1}\equiv_p \sum_{r=1}^{p-1}\frac{1}{r^2}\equiv_p\sum_{j=1}^{p-1}j\equiv_p 0$ as $p\neq2$.
\end{proof}

\section{Proof of Conjecture \ref{MainC}}

\noindent
In this section, we combine the results from the preceding sections to arrive at a proof for Theorem \ref{red} (restated here for the reader's convenience) and therefore for Conjecture \ref{MainC}.

\begin{theorem} For a prime $p>3$ and $m, n\in \mathbb{N}^+$, we have
\begin{align*} 
\sum_{r=1}^{p-1}&
(-1)^{r}\binom{3pm+3r}{pm+r}\binom{2pm+2r}{pm+r} 
\binom{pn}{3pm+3r}\binom{p(n+m)+r}{pm+r}3^{-3r}\\\nonumber
&\equiv_{p^3}
p\binom{3m}{m}\binom{2m}{m} 
\binom{n}{3m}\binom{n+m}{m}q_p(3^{-(n-3m)}).
\end{align*}
\end{theorem}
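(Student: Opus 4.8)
The plan is to substitute the three expansions from Corollaries \ref{C52}, \ref{C54}, and \ref{C55} into each summand, exploiting that $\binom{pn}{3pm+3r}$ already carries a factor of $p$. Writing $N=n-3m$ and $P=\binom{3m}{m}\binom{2m}{m}\binom{n+m}{m}\binom{n}{3m}$, I would first verify that these congruences compose cleanly modulo $p^3$: since $\binom{pn}{3pm+3r}=O(p)$ while the product of the remaining three binomials is $p$-integral, replacing $\binom{pn}{3pm+3r}$ by its value modulo $p^3$ and the other factors by their values modulo $p^2$ costs only $O(p^3)$ per term. The sign $(-1)^r$ in the summand cancels the $(-1)^r$ produced by Corollary \ref{C55}, and the factors $\binom{3m}{m}\binom{2m}{m}$, $\binom{n+m}{m}$, $\binom{n}{3m}$ assemble into $P$. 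Thus the whole sum equals $pP$ times a single sum $\sum_{r=1}^{p-1}\sigma_r$, and the theorem reduces to $\sum_{r=1}^{p-1}\sigma_r\equiv_{p^2}q_p(3^{-N})$.

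Next I would expand $\sigma_r$ in the principal range $0<r<p/3$. Writing $B_r=\binom{3r}{r,r,r}3^{-3r}$ and working modulo $p^2$, the factors $[1+3pm(H_{3r}-H_r)]$, $[1+pnH_r]$, $(\tfrac1{3r}-\tfrac{pm}{3r^2})$ and $N(-1+pnH_{3r-1})$ multiply out; after using $H_{3r}=H_{3r-1}+\tfrac1{3r}$ the $m$-dependence in the coefficient of $p$ collapses, leaving
\[
\sigma_r\equiv_{p^2}-\frac{N}{3}\,\frac{B_r}{r}+p\,\frac{N^2}{3}\,\frac{B_r(H_{3r}-H_r)}{r}-p\,\frac{N^2}{9}\,\frac{B_r}{r^2}.
\]
Were this valid over the whole range, summation would yield exactly $-\tfrac{N}{3}S_1+p\tfrac{N^2}{3}S_3-p\tfrac{N^2}{9}S_2$, where $S_1,S_2,S_3$ denote the three master sums of Lemma \ref{L57}.

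The main obstacle, and the part demanding genuine care, is that Corollary \ref{C55} has three regimes, so the partial sum over $0<r<p/3$ is not the full sum in Lemma \ref{L57}. The saving feature is arithmetic: $v_p(B_r)=1$ for $p/3<r<2p/3$ and $v_p(B_r)=2$ for $2p/3<r<p$, so the upper range contributes $0$ modulo $p^2$. In the middle range only the leading term of $\sigma_r$ survives, but $H_{3r}$ now has a simple pole from the term $1/p$, which I would track in two places: in $[1+3pm(H_{3r}-H_r)]$ it promotes the coefficient from $1$ to $1+3m$, whereupon the factor $\tfrac{2}{3m+1}$ from Corollary \ref{C55} cancels the $3m+1$; and in the definition of $S_3$, whose middle-range terms are genuinely $O(1)$ because $B_r/p$ is a unit times that pole. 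Carrying out this accounting, the middle-range contribution $\tfrac{N(N-1)}{3}\sum_{p/3<r<2p/3}B_r/r$ combines with the missing tail of $-\tfrac{N}{3}S_1$ and with the pole part of $S_3$, so that every middle-range piece reorganizes precisely into the full-range sums $S_1,S_2,S_3$ with the coefficients displayed above.

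Finally I would invoke Lemma \ref{L57}, substituting $S_1\equiv_{p^2}-3q_p(1/3)+\tfrac{3p}{2}q_p(1/3)^2$, $S_2\equiv_p-\tfrac92 q_p(1/3)^2$, and $S_3\equiv_p 0$. The expression collapses to $Nq_p(1/3)+p\binom{N}{2}q_p(1/3)^2$, which by \eqref{FT} applied with $a=1/3$ and $d=N$ is exactly $q_p(3^{-N})=q_p(3^{-(n-3m)})$. Multiplying back by $pP$ gives the claimed congruence.
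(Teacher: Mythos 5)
Your proposal is correct and follows essentially the same route as the paper: substitute Corollaries \ref{C52}, \ref{C54}, \ref{C55}, split at $r<p/3$, $p/3<r<2p/3$, $2p/3<r<p$, use the $p$-adic valuation of $\binom{3r}{r,r,r}$ to kill the top range and reduce the middle range to its leading term, reassemble everything into the three full-range sums of Lemma \ref{L57}, and finish with \eqref{FT}. The only differences are cosmetic (the paper factors out $N$ before forming its analogue of $\sigma_r$, and phrases the middle-range bookkeeping via the identity $p\sum_{p/3<r<2p/3}B_r(H_{3r}-H_r)/r\equiv_{p^2}\sum_{p/3<r<2p/3}B_r/r$ rather than as a coefficient match), and your accounting checks out.
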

\begin{proof} Based on Corollaries \ref{C52}, \ref{C54}, \ref{C55} and the congruence \eqref{FT}, the assertion is equivalent to 
\begin{align}\label{closecong} 
\sum_{r=1}^{p-1}&
\binom{3r}{r,r,r} (1+3pm(H_{3r}-H_r))(1+pnH_r)
\left(\frac{1}{3r}-\frac{pm}{3r^2}\right)B_r(p,n,m)
3^{-3r}\\\nonumber
&\equiv_{p^2}
q_p(1/3)+\frac{p(N-1)}{2}q_p(1/3)^2;
\end{align}
where
\begin{align*} B_r(p,n,m)=
\begin{cases} 
\displaystyle
-1+pnH_{3r-1}, &\mbox{if }  0<r< \frac{p}3 \vspace{2mm}\\
\displaystyle
\frac{(N-1)(1-pnH_{3r-1-p})}{3m+1}, &\mbox{if }\frac{p}3<r< \frac{2p}3 \vspace{2mm}\\
\displaystyle
\frac{(N-1)(N-2)(-1+pnH_{3r-1-2p})}{(3m+1)(3m+2)}, &\mbox{if } \frac{2p}3<r<p. 
\end{cases}
\end{align*}
Now we split the sum on the left-hand side of \eqref{closecong} into three pieces according as
$$S_1=\sum_{r=1}^{\lfloor p/3\rfloor}(\cdot), \qquad
S_2=\sum_{r=\lceil p/3\rceil}^{\lfloor 2p/3\rfloor}(\cdot), \qquad\text{and} \qquad
S_3=\sum_{r=\lceil 2p/3\rceil}^{p-1}(\cdot).$$
As regards $S_1$, 
$$S_1\equiv_{p^2}\frac{1}{3}
\sum_{r=1}^{\lfloor p/3\rfloor}
\binom{3r}{r,r,r} 
\left(-\frac{1}{r}-\frac{pN}{3r^2}+\frac{pN(H_{3r}-H_r)}{r}\right)
3^{-3r}.$$
If $\frac{p}3<r<\frac{2p}3$ then $\binom{3r}{r,r,r}\equiv_p0$ and $1+3pm(H_{3r}-H_r)\equiv_{p} 1+3m$ 
with $B_r(p,n,m)\equiv_{p}\frac{(N-1)}{(3m+1)}$.
These imply that
\begin{align*}
S_2&\equiv_{p^2}
\sum_{r=\lceil p/3\rceil}^{\lfloor 2p/3\rfloor}
\binom{3r}{r,r,r} (1+3pm(H_{3r}-H_r))(1+pnH_r)
\left(\frac{1}{3r}-\frac{pm}{3r^2}\right)B_r(p,n,m)
3^{-3r}\\
&\equiv_{p^2}
\sum_{r=\lceil p/3\rceil}^{\lfloor 2p/3\rfloor}
\binom{3r}{r,r,r} (1+3m)
\left(\frac{1}{3r}\right)\frac{(N-1)}{(3m+1)}
3^{-3r}\\
&\equiv_{p^2}\frac{(N-1)}{3}\sum_{r=\lceil p/3\rceil}^{\lfloor 2p/3\rfloor}
\binom{3r}{r,r,r}\frac{3^{-3r}}{r}. 
\end{align*}
Finally, we have that $S_3\equiv_{p^2}0$ because obviously $\binom{3r}{r,r,r}\equiv_{p^2}0$
as long as $\frac{2p}3<r<p$. 

\smallskip
\noindent 
Again $\binom{3r}{r,r,r}\equiv_p0$ if $\frac{p}3<r<\frac{2p}3$ and $\binom{3r}{r,r,r}\equiv_{p^2}0$ if $\frac{2p}3<r<p$. So, from Lemma \ref{L57} we know
\begin{align*}
\sum_{r=1}^{\lfloor 2p/3\rfloor}\binom{3r}{r,r,r}\frac{3^{-3r}}{r}
&\equiv_{p^2}\sum_{r=1}^{p-1}
\binom{3r}{r,r,r}\frac{3^{-3r}}{r}
\equiv_{p^2}
-3q_p(1/3)+\frac{3p}{2}q_p(1/3)^2, \\
p\sum_{r=1}^{\lfloor p/3\rfloor}\binom{3r}{r,r,r}\frac{3^{-3r}}{r^2}
&\equiv_{p^2} p\sum_{r=1}^{p-1}
\binom{3r}{r,r,r}\frac{3^{-3r}}{r^2}
\equiv_{p^2} -\frac{9p}{2}q_p(1/3)^2.
\end{align*}
As before $\binom{3r}{r,r,r}\equiv_{p^2}0$ for $\frac{2p}3<r<p$.  As well as $\binom{3r}{r,r,r}\equiv_p0$ and $pH_{3r}-pH_r\equiv_p 1$
for $\frac{p}3<r<\frac{2p}3$. Therefore, by Lemma \ref{L57}
\begin{align*}
0\equiv&_{p^2} \,
p\sum_{r=1}^{p-1}\binom{3r}{r,r,r}\frac{(H_{3r}-H_r)3^{-3r}}{r}
\equiv_{p^2}
p\sum_{r=1}^{\lfloor 2p/3\rfloor}
\binom{3r}{r,r,r}\frac{(H_{3r}-H_r)3^{-3r}}{r}\\
&\equiv_{p^2} p\sum_{r=1}^{\lfloor p/3\rfloor}
\binom{3r}{r,r,r}\frac{(H_{3r}-H_r)3^{-3r}}{r} 
 +\sum_{r=\lceil p/3\rceil}^{\lfloor 2p/3\rfloor}
\binom{3r}{r,r,r}\frac{3^{-3r}}{r}.
\end{align*}
Putting all these together, we conclude that
\begin{align*}
S_1+S_2+S_3&\equiv_{p^2} 
\frac{1}{3}
\sum_{r=1}^{\lfloor p/3\rfloor}
\binom{3r}{r,r,r} 
\left(-\frac{1}{r}-\frac{pN}{3r^2}+\frac{pN(H_{3r}-H_r)}{r}\right)
3^{-3r}\\
&\qquad\quad +\frac{(N-1)}{3}\sum_{r=\lceil p/3\rceil}^{\lfloor 2p/3\rfloor}
\binom{3r}{r,r,r}\frac{3^{-3r}}{r}+0\\
&\equiv_{p^2}-\frac13\sum_{r=1}^{\lfloor 2p/3\rfloor}\binom{3r}{r,r,r}\frac{3^{-3r}}{r} 
-\frac{N}9\sum_{r=1}^{\lfloor p/3\rfloor}\binom{3r}{r,r,r}\frac{3^{-3r}}{r^2}
+\frac{N}{3}\cdot 0 \\
&\equiv_{p^2}
-\frac{1}{3}\left(-3q_p(1/3)+\frac{3p}{2}q_p(1/3)^2\right)
-\frac{N}{9}\left(-\frac{9p}{2}q_p(1/3)^2\right)\\
&\equiv_{p^2}
q_p(1/3)+\frac{p(N-1)}{2}q_p(1/3)^2,
\end{align*}
which is exactly what we expect. The proof is complete.
\end{proof}

\section{Conclusions and Remarks}

\noindent
In this final section, we extend the congruence on $a_i(n)$ (for $i>0$),  discussed in the earlier sections, from modulo $p^2$ to modulo $p^3$. While stating our claim in its generality, we only exhibit proof outlines for the case $i=1$ as a prototypical example. We believe the curious researcher would be able to account for the remaining cases.

\begin{conjecture} For $n,i\in\mathbb{N}^+$ and a prime $p>2i$,
$$
a_i(pn)\equiv_{p^3} (-1)^{i-1}\frac{a_1(pn)}{i^2\binom{2i-1}{i-1}}
\equiv_{p^3} \frac{(-1)^{i-1}p^2\binom{n+2}{2}a_1(n)}{i^2\binom{2i-1}{i-1}} .
$$
\end{conjecture}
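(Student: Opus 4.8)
The plan is to split the asserted chain into two independent statements and prove each by the \emph{reduction $+$ $p$-identities} scheme of Sections 4--6. The first is the base case $a_1(pn)\equiv_{p^3}p^2\binom{n+2}{2}a_1(n)$, which is exactly the $i=1$ instance (there $(-1)^{i-1}=1$ and $i^2\binom{2i-1}{i-1}=1$), and the second is the reduction $a_i(pn)\equiv_{p^3}(-1)^{i-1}(i^2\binom{2i-1}{i-1})^{-1}a_1(pn)$ of the general case to it. The common theme is that every estimate must be pushed one power of $p$ beyond what Section 3 needed: there the inner sum over $r$ was required only modulo $p$, whereas here its $p$-coefficient modulo $p$ is what matters.

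For the base case, writing $k=pm+r$ with $0\le r<p$ gives
\begin{align*}
a_1(pn)=\sum_{m}(-1)^{n-m}3^{p(n-3m)-1}\sum_{r=0}^{p-1}(-1)^{r}
&\binom{3pm+3r+1}{pm+r}\binom{2pm+2r+1}{pm+r}\\
&\times\binom{pn}{3pm+3r+1}\binom{p(n+m)+r}{pm+r}3^{-3r}.
\end{align*}
In contrast with the $a_0$ case no value of $r$ yields a clean $\binom{pb}{pc}$, so there is no term to isolate; by the theorem of Section 3 the inner sum is $O(p^2)$, and the task is to pin down its $p^2$-coefficient modulo $p$. I would first record the $i=1$ analogues of Corollaries \ref{C52} and \ref{C55} (the shift only replaces $\binom{3r}{r,r,r}$ by $\binom{3r+1}{r}\binom{2r+1}{r}$ and moves the Gessel factor from $3r$ to $3r+1$), together with Corollary \ref{C54}, which is unchanged. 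Substituting these and invoking \eqref{FT}, the inner sum collapses modulo $p^3$ to a single sum over $r$, which I would split into the ranges $0<r<p/3$, $p/3<r<2p/3$, $2p/3<r<p$ exactly as in Section 6, using $\binom{3r}{r,r,r}\equiv_p0$ on the middle range and $\equiv_{p^2}0$ on the top. The polynomial factor $\binom{n+2}{2}=\tfrac12(n+1)(n+2)$ then assembles from the binomials in $(n,m)$ produced by the analogue of Corollary \ref{C55} and the factor $1+pnH_r$ of Corollary \ref{C54}, once the sum over $m$ is carried out.

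For the reduction to $i=1$, I would exploit that the whole $i$-dependence of $a_i(pn)$ enters only through the factor
\[
\binom{3r+i}{r}\binom{2r+i}{r}\frac{1}{3r+i}=\binom{3r}{r,r,r}\frac{\prod_{s=1}^{i-1}(3r+s)}{\prod_{s=1}^{i}(r+s)}
\]
and through the range cut-offs. A partial-fraction decomposition $\frac{\prod_{s=1}^{i-1}(3r+s)}{\prod_{s=1}^{i}(r+s)}=\sum_{j=1}^{i}\frac{\alpha_j(i)}{r+j}$ turns the governing $r$-sum into $\sum_{j=1}^{i}\alpha_j(i)W_j$, where $W_j=\sum_{r}\binom{3r}{r,r,r}3^{-3r}/(r+j)$, with the entire $i$-dependence now carried by the rational residues $\alpha_j(i)$. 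Since $W_j\equiv_p0$ for $1\le j\le i<p/3$ by Corollary \ref{C27}, I would refine Lemma \ref{PC} to modulus $p^2$, writing $(-1)^{r}\binom{\lfloor p/3\rfloor}{r}\binom{\lfloor p/3\rfloor+r}{r}=\binom{3r}{r,r,r}3^{-3r}(1+p\gamma_r)$, and use the \emph{exact} vanishing of the Morton sum \eqref{Mort} at $y=j$ to obtain $W_j\equiv_{p^2}-p\sum_{r}\binom{3r}{r,r,r}3^{-3r}\gamma_r/(r+j)$. Because $W_j\equiv_p0$, the $r$-independent (hence $q_p$-carrying) part of $\gamma_r$ contributes nothing, so each refined value is a genuine rational modulo $p$ and the common factor $-p$ cancels in the ratio $a_i(pn)/a_1(pn)$. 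What remains is a finite rational identity in the $\alpha_j(i)$ which I expect to equal $(-1)^{i-1}/(i^2\binom{2i-1}{i-1})$; for $i=2$, for instance, $\alpha_1(2)=-2$ and $\alpha_2(2)=5$, and one checks the single resulting relation. The boundary régime $p/3<i<p/2$ permitted by $p>2i$, where some $W_j$ no longer vanish, is the part I would leave, as suggested, to the curious researcher.

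The main obstacle is the $p$-identities step of the base case: one needs the $i=1$ counterparts of Lemma \ref{L57}, namely the values modulo $p^2$ of $\sum_{r}\binom{3r}{r,r,r}3^{-3r}/(r+1)$, of $\sum_{r}\binom{3r}{r,r,r}3^{-3r}/(r+1)^2$, and of $\sum_{r}\binom{3r}{r,r,r}(H_{3r}-H_r)3^{-3r}/(r+1)$. These are one order more delicate than the sums handled in Section 6, and the decisive point is that the final answer $\binom{n+2}{2}$ carries no Fermat quotient: I must therefore verify that every $q_p(1/3)$ contribution cancels, leaving only the polynomial factor. Getting this cancellation to come out exactly is where the real work lies.
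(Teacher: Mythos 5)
Your skeleton for the base case $a_1(pn)\equiv_{p^3}p^2\binom{n+2}{2}a_1(n)$ correctly locates the crux: the three sums $\sum_r\binom{3r}{r,r,r}3^{-3r}/(r+1)$ modulo $p^2$, $\sum_r\binom{3r}{r,r,r}3^{-3r}/(r+1)^2$ modulo $p$, and $\sum_r\binom{3r}{r,r,r}(H_{3r}-H_r)3^{-3r}/(r+1)$ modulo $p$ are exactly the ingredients the paper records in steps (D) and (E) of its outline, where they are evaluated as $p-3p^2q_p(1/3)$, $-\tfrac72$ and $-\tfrac32$ respectively, the first two via the exact identity $\sum_{r=0}^{p-1}\binom{3r}{r,r,r}3^{-3r}/(r+1)=\tfrac{9p}{2}\binom{3p}{p,p,p}3^{-3p}$. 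You leave these unevaluated and say so; since the entire content of the base case is that every $q_p(1/3)$ cancels and only $\binom{n+2}{2}$ survives, that evaluation \emph{is} the proof, not a detail. Your route to the single sum is also heavier than the paper's: you propose to redo Corollaries \ref{C52} and \ref{C55} for the shifted binomials $\binom{3pm+3r+1}{pm+r}$, $\binom{2pm+2r+1}{pm+r}$, $\binom{pn}{3pm+3r+1}$, whereas the paper first applies a partial-fraction identity writing $a_1(n)=-a_0(n)+\tfrac{n+3}{3}b_1(n)$ with $b_1(n)$ a companion sum carrying the denominator $k+1$ but the \emph{unshifted} central binomials, invokes the already-proved $a_0(pn)\equiv_{p^3}a_0(n)$, and is then able to reuse Corollaries \ref{C52}--\ref{C55} verbatim, the only new feature being $\tfrac1{pm+r+1}\equiv_{p^2}\tfrac1{r+1}-\tfrac{pm}{(r+1)^2}$.

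The reduction of general $i$ to $i=1$ (which the paper does not attempt at all --- it explicitly leaves $i>1$ to the reader) has a genuine gap. Both $a_i(pn)$ and $a_1(pn)$ vanish modulo $p^2$, so the asserted congruence modulo $p^3$ compares their $p^2$-coefficients modulo $p$; your factorization of the $i$-dependence through $\sum_{j=1}^i\alpha_j(i)W_j$ controls only the Lucas-level part of the inner $r$-sum. The order-$p$ corrections --- the $H_{3r}-H_r$ terms from Corollary \ref{C52}, the $H_r$ term from Corollary \ref{C54}, the $H_{3r-1-\epsilon p}$ and $1/r^2$ terms from Corollary \ref{C55}, all of which were legitimately discarded in Section 3 but are precisely what a modulo-$p^3$ statement lives on --- produce additional sums $\sum_r\binom{3r}{r,r,r}3^{-3r}w_r/(r+j)$ for several weights $w_r$, each needed modulo $p$ for every $j\le i$. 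These enter linearly with the same coefficients $\alpha_j(i)$ but with different $j$-profiles, so there is no a priori reason the total is the fixed scalar $(-1)^{i-1}/(i^2\binom{2i-1}{i-1})$ times the $i=1$ value; establishing that proportionality is essentially the conjecture itself. Your remark that ``the common factor $-p$ cancels in the ratio'' presumes this factorization rather than proving it.
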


\begin{proof} Ingredients for $a_1(pn)\equiv_{p^3}p^2\binom{n+2}2a_1(n)$. 

\smallskip
\noindent
(A) By partial fraction decomposition
\begin{align*}
a_i(n)&=\frac{1}{3^i}
\sum_{k=0}^{n-1}(-1)^{n-k}\binom{3k}{k}\binom{2k}{k}
             \binom{n}{3k}\binom{n+k}{k}\frac{\binom{n-3k}{i}3^{n-3k}}{\binom{k+i}{i}}\\
&=(-1)^ia_0(n)+\frac{i}{3^i}\sum_{j=1}^i(-1)^{j-1}\binom{i-1}{j-1}\binom{n+3j}{i}b_j(n)
\end{align*}
where for $j\in\mathbb{N}^+$,
$$b_j(n):=\sum_{k=0}^{n-1}(-1)^{n-k}(n-3k)\binom{3k}{k}\binom{2k}{k}
             \binom{n}{3k}\binom{n+k}{k}\frac{3^{n-3k}}{k+j}.$$
Thus, $a_0(np)\equiv_{p^3} a_0(n)$ implies
\begin{align*}
a_1(np)&=-a_0(np)+\frac{np+3}{3}\,b_1(np)\equiv_{p^3} -a_0(n)+\frac{np+3}{3}\,b_1(np)\\
&\equiv_{p^3} a_1(n)+\frac{np+3}{3}\,b_1(np)-\frac{p+3}{3}\,b_1(p).
\end{align*}
(B) Hence, it suffices to show that
$$b_1(np)\equiv_{p^3}
\frac{3}{np+3}\left(p^2\binom{n+2}{2}-1\right)a_1(n)
+\frac{n+3}{np+3}\,b_1(n),$$
or, since $a_1(n)=-a_0(n)+(n+3)b_1(n)/3$,
\begin{equation}\label{b1}
b_1(np)\equiv_{p^3}
p^2\binom{n+3}{3}b_1(n)+\left(1-\frac{pn}{3}
             -\frac{p^2(n+3)(7n+6)}{18}\right)a_0(n).
\end{equation}    
(C) The above congruence is implied by the following
\begin{align}\label{C}
\sum_{r=0}^{p-1}&
(-1)^{r}\binom{3pm+3r}{pm+r}\binom{2pm+2r}{pm+r}
\binom{pn}{3pm+3r}\binom{p(n+m)+r}{pm+r}\frac{3^{-3r}}{pm+r+1}\\\nonumber
&\equiv_{p^3}
\left(\frac{p^2}{m+1}\binom{n+3}{3}+1-\frac{pn}{3}
             -\frac{p^2(n+3)(7n+6)}{18}\right)\\\nonumber
&\qquad\qquad\qquad\qquad\cdot\binom{3m}{m}\binom{2m}{m}
\binom{n}{3m}\binom{n+m}{m}3^{-N(p-1)}
\end{align}
By summing over $m$,  it is immediate to recover \eqref{b1}. 
\smallskip

\noindent
(D) In order to prove \eqref{C}, we have the old machinery,
$\frac{1}{pm+r+1}\equiv_{p^2}
\frac{1}{r+1} -\frac{mp}{(r+1)^2},$
and 
\begin{align*}
&\sum_{r=0}^{p-1}\binom{3r}{r,r,r}\frac{3^{-3r}}{r+1}=
\frac{9p}{2}\binom{3p}{p,p,p} 3^{-3p}\equiv_{p^2}\,\, p-3p^2q_p(1/3),\\
&\sum_{r=0}^{p-1}\binom{3r}{r,r,r}\frac{3^{-3r}}{(r+1)^2}=
\frac{9(9p+2)}{4}\binom{3p}{p,p,p} 3^{-3p}-\frac{9}{2}\equiv_{p}-\frac{7}2.\\
\end{align*}
(E) Finally, we can modify a previous proof as follows:
\begin{align*}
\sum_{r=0}^{p-1}
\binom{3r}{r,r,r}\frac{(3H_{3r}-H_{r})3^{-3r}}{r+1}
&=\sum_{r=1}^{p-1} \frac{(1/3)_r (2/3)_r}{(1)_r^2}\cdot\frac{1}{r+1}
\cdot\sum_{j=0}^{r-1}\left(\frac{1}{1/3+j}+\frac{1}{2/3+j}\right)\\
&=\sum_{r=1}^{p-1}\frac{1}{r+1}
\sum_{k=0}^{r-1}\frac{(1/3)_k (2/3)_k}{(1)_k^2}\cdot\frac{1}{r-k}\\
&=
\sum_{k=0}^{p-2}\frac{(1/3)_k (2/3)_k}{(1)_k^2}
\sum_{r=k+1}^{p-1}\frac{1}{(r+1)(r-k)}\\
&=
\sum_{k=0}^{p-2}\frac{(1/3)_k (2/3)_k}{(1)_k^2}
\left(\frac{1}{k+1}\sum_{r=k+1}^{p-1}\left(\frac{1}{r-k}-\frac{1}{r+1}\right)\right)\\
&=
\sum_{k=0}^{p-2}\frac{(1/3)_k (2/3)_k}{(1)_k^2}
\cdot \frac{H_{p-1-k}-H_{p}+H_{k+1}}{k+1}\\
&\equiv_p
\sum_{k=0}^{p-1}
\binom{3k}{k,k,k}\frac{(H_{k}-H_{p }+H_{k+1})\,3^{-3k}}{k+1},
\end{align*}
which implies that
\begin{align*}
\sum_{r=0}^{p-1}
\binom{3r}{r,r,r}\frac{(H_{3r}-H_{r})3^{-3r}}{r+1}
&\equiv_p \frac{1}{3}
\sum_{k=0}^{p-1}
\binom{3k}{k,k,k}\frac{(-1/p+1/(k+1))\,3^{-3k}}{k+1}\\
&\equiv_p \frac{1}{3}\left(-1-\frac{7}{2}\right)=-\frac{3}{2}.
\end{align*}
\end{proof}

\begin{remark} We showed that the conjecture $a_0(pn)\equiv_{p^3}a_0(n)$ holds true. Although it is not pursued here, the techniques established in this paper if combined with existing literature on supercongruences (see references below) for binomials of the type $\binom{p^rn+k}{p^tm+j}$, there is enough reliable verity to believe that $a_0(p^rn)\equiv_{p^{3r}}a_0(p^{r-1}n)$ should be within easy grasp.

\end{remark}
\bigskip

\noindent
\textbf{Acknowledgements.} The first-named author is grateful to A. Straub for bringing the conjecture on the Almkvist-Zudilin numbers to his attention.

\bigskip

\end{document}

\bibitem{G} J. W. L. Glaisher, 
{\em On the residues of the sums of products of the first $p-1$ numbers and their powers to
modulus $p^2$ or $p^3$}, 
Quart. J. Math., {\bf 31} (1900), 321--353.

\bibitem{RV} 
F. Rodriguez-Villegas, 
{\em Hypergeometric families of Calabi-Yau manifolds, Calabi-Yau varieties and mirror symmetry}, 
(Toronto, ON, 2001) Fields Inst. Commun.,  Amer. Math. Soc., Providence, RI, {\bf 38} (2003), 223--231.